\newcommand{\N}{\mathcal{N}}
\newcommand{\fm}{\mathfrak m}
\newcommand{\fn}{\mathfrak n}
\newcommand{\R}{\mathbb{R}}
\newcommand{\M}{\mathcal{M}}
\newtheorem{prop*}{Proposition}
\newtheorem{thm}[subsection]{Theorem}
\newtheorem{thm*}{Theorem}
\newtheorem{lemma*}{Lemma}
\newtheorem{cor*}{Corollary}
\newtheorem{rem}[subsection]{Remark}
\newtheorem{rem*}{Remark}
\newtheorem{def*}{Definition}
\begin{document}

\title[A note on GA and control problems with SO(3)--symmetries]{A note on geometric algebras and control problems with SO(3)--symmetries}

\keywords{
local control and optimality, Carnot groups, symmetries, sub--Riemannian geodesics, geometric algebras}

\subjclass[2020]{15A67, 53C17}

\author{
Jaroslav Hrdina, Ale\v s N\' avrat, Petr Va\v s\'ik and Lenka Zalabov\'a
}

\address{
JH, AN, PV: Institute of Mathematics, 
	Faculty of Mechanical Engineering,  Brno University of Technology,
	Technick\' a 2896/2, 616 69 Brno, Czech Republic; LZ: Institute of Mathematics, 
	Faculty of Science, University of South Bohemia, 
	Brani\v sovsk\' a 1760, 370 05 \v Cesk\' e Bud\v ejovice, and
    Department of Mathematics and Statistics, 
	Faculty of Science, Masaryk University,
	Kotl\' a\v rsk\' a 2, 611 37 Brno, Czech Republic
}

\email{hrdina@fme.vutbr.cz, navrat.a@fme.vutbr.cz
}	
\email{vasik@fme.vutbr.cz, lzalabova@gmail.com
}	
\thanks{
The first three authors were supported by the grant no. FSI-S-20-6187. Fourth author is supported by the grant no. 20-11473S Symmetry and invariance in analysis, geometric modeling and control theory from the Czech Science Foundation. Finally, we thank the referee for valuable comments.
}

\maketitle

\thispagestyle{empty}

\vspace{7pt}

\begin{abstract}
We study the role of symmetries in control systems through the geometric algebra approach. We discuss two specific control problems on  Carnot groups of step $2$ invariant with respect to the action of $SO(3)$. We understand the geodesics as the curves in suitable geometric algebras which allows us to assess a new algorithm for the local control.
\end{abstract}

\section{Introduction}

Geometric control theory uses geometric methods to control various mechanical systems \cite{ju,bloch}. We use the methods of sub--Riemannian geometry and Hamiltonian concept \cite{AS,ABB}. As a reasonable starting point, we consider mechanisms moving in the plane, typically wheeled mechanisms like cars (with or without trailers) or robotic snakes \cite{hvnm, I04}. The movement of a planar mechanisms is always invariant with respect to the action of the Euclidean group $SE(2)$. As the prototypes of planar mechanisms, we choose those consisting of the body in the shape of a triangle and three legs connected to the vertices of the  body by joints of various types and combinations. Although such mechanisms have almost the same shape, the configuration spaces may differ. In particular, possible motions of the mechanism induce a specific filtration in the configuration space. We present two examples that carry the filtration $(3,6)$ and $(4,7),$ respectively \cite{I04, hz}. 

    \begin{figure} 
	\begin{center}
	\includegraphics[height=45mm]{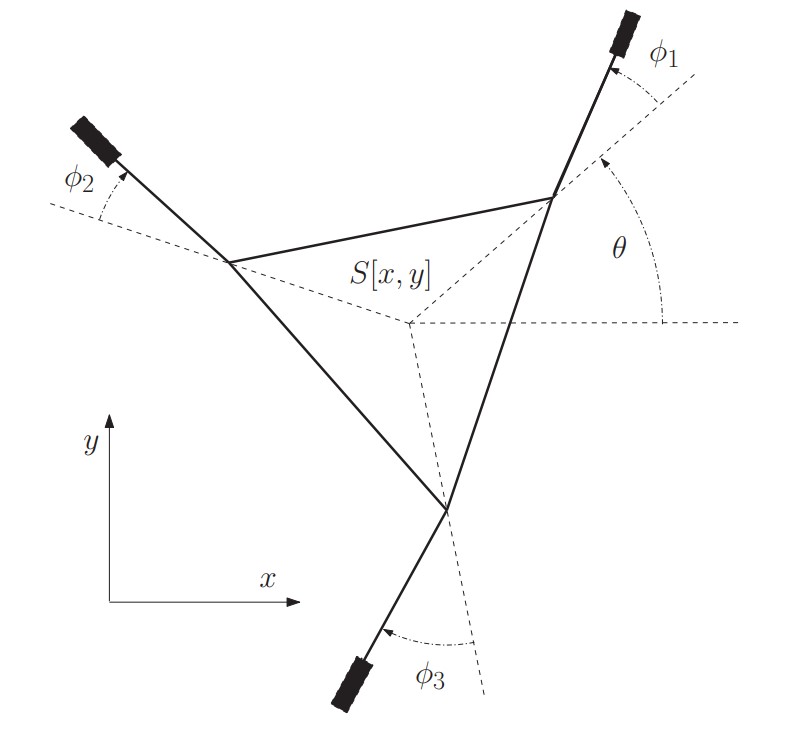}
	\hspace{1cm}
	\includegraphics[height=45mm]{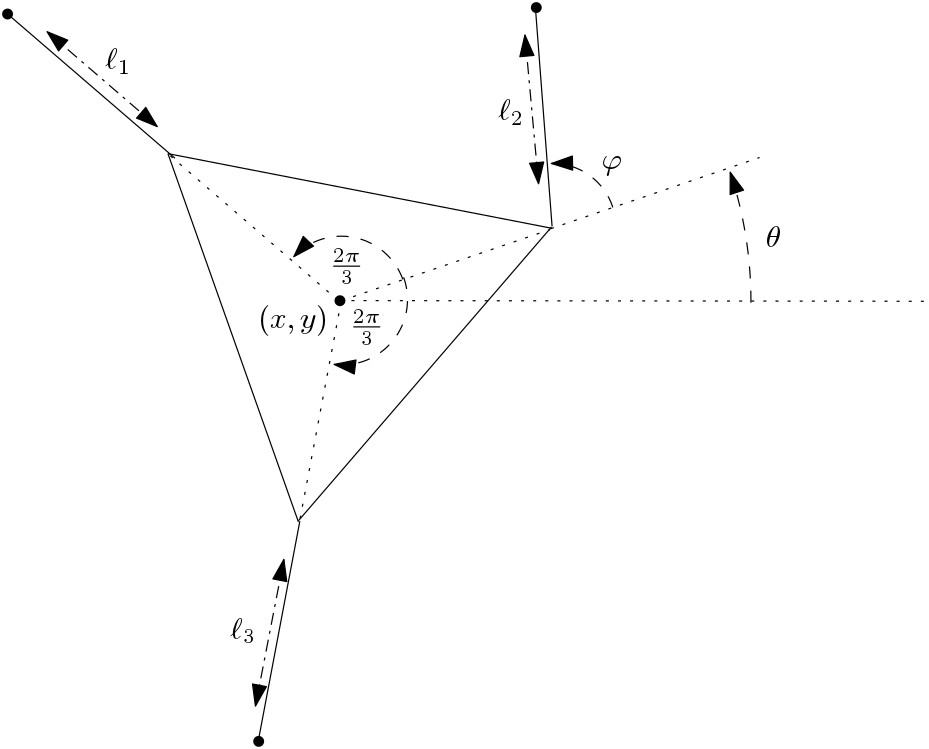} 
	\caption{Generalized trident snakes}
	\end{center}
	\end{figure}
	
To control the mechanisms locally, we consider the nilpotent approximations of the  original control systems \cite{b96}. Although the configuration spaces and their approximations have the same filtration, the approximations form Carnot groups that are generally endowed with more symmetries  \cite{mya1}. One gets the symmetries generated by the right--invariant vector fields and there may be additional symmetries acting non--trivially on the distribution. Our Carnot groups of filtrations $(3,6)$ and $(4,7)$ carry subgroups of the symmetries isomorphic to $SO(3)$  \cite{J,hz}. This observation leads to the idea of the local control in geometric algebra approach. 

We reformulate the control problems in the concept of geometric algebras  $\mathbb G_3$ and $\mathbb G_4$ \cite{hil1,per,Lounesto}. We use the natural $SO(3)$--invariant operations in geometric algebras to reduce the set of geodesics to a simpler set of curves in the geometric algebra  \cite{hzn2}. Namely, each geodesic is a linear combination of orthogonal vectors, and  $SO(3)$ acts on the geodesics by means of the action on the appropriate orthonormal system of vectors. So it is sufficient to study geodesics for one fixed orthonormal basis, i.e. we can study just geodesics in the moduli space over the action of $SO(3)$. 

We present the local control algorithm for finding geodesics passing through the origin and an arbitrary point in its neighbourhood. The algorithm is based on the use of rotors in order to relate two orthogonal bases. We provide an efficient method to such comparison using geometric algebras. We illustrate our algorithm on two specific examples.

\section{Nilpotent control problems} \label{sec2}
We focus on two control problems such that their symmetry groups contain $SO(3)$ as their subgroups.  The first system has the growth vector $(3,6)$ and the other one has the growth vector $(4,7)$ \cite{mya1,hz}. 

\subsection{Control problems on Carnot groups of step $2$}
By nilpotent control problems we mean the invariant control problems on Carnot groups and we consider the Carnot groups $G$ of step $2$ with the filtration $(m,n)$ \cite{ABB,Rizzi, talianky}.  If we denote the local coordinates by $(x,z) \in \mathbb R^m \oplus \mathbb R^{n-m}$, we can model the corresponding Lie algebra $\mathfrak g$ of vector fields  
\begin{align}
\begin{split} \label{vp}
 X_i&= {\partial_{x_i}} -\frac{1}{2} \sum_{l=1}^{n-m}\sum_{j=1}^m c_{ij}^l  x_j {\partial_{z_l}}\ \ \  j=1, \dots ,m  \\ \
X_{m+j}&=  {\partial_{z_{j}}} \ \ \hspace{2.9cm}  j=1, \dots ,m-n  ,
\end{split}
\end{align}
where $c_{jl}^k$ are the structure constants of the Lie algebra $\mathfrak g$ and 
 the symbol $\partial$ stands for partial derivative. We discuss 
the related optimal control problem   
\begin{align} \dot q (t)
= u_1 X_1 + \cdots +u_m X_m
\label{control}
\end{align}
for $t>0$ and $q$ in $G$ and the control $u=(u_1(t),\dots,u_m(t)) \in \R^m$ with the boundary condition $q(0)=q_1,$ $q(T)=q_2$ for fixed points $q_1, q_2 \in G$,
where we minimize the cost functional ${1 \over 2}\int_0^T (u_1^2+\cdots+u_m^2) \mathrm dt$.
The solutions $q(t)$ then correspond to the sub--Riemannian geodesics, i.e. admissible curves parametrized by a constant speed whose sufficiently small arcs are the length minimizers.

We use the Hamiltonian approach to this control problem \cite{ABB}. 
There are no strict abnormal extremals for the step $2$ Carnot groups, so we focus on the normal geodesics and address them just as geodesics. 
The left--invariant vector fields $X_i$, $i=1,\dots,m$ form a basis of $TG$ and determine the left--invariant coordinates on $G$.
We define the corresponding left--invariant coordinates $h_i$, $i=1,\dots,m$ and $w_i$, $i=1,\dots, n-m$ on the fibres of $T^*G$ by $h_i (\lambda )=  \lambda(X_i) $ and  $w_i (\lambda )=  \lambda(X_{m+i})$, for arbitrary $1$--forms $\lambda$ on $G$. Thus we use $(x_i,w_i)$ as the global coordinates on $T^*G$.

The geodesics are exactly the projections of normal Pontryagin extremals, i.e. the integral curves of the left--invariant normal Hamiltonian 
\begin{align} \label{ham}
H={1 \over 2}(h_1^2+h_2^2+ \cdots + h_m^2),
\end{align}
on $G$. Assume that $\lambda(t)=(x_i(t),z_i(t),h_i(t),w_i(t))$ in $T^*G$ is a normal extremal. 
Then the controls $u_j$ to the system  \eqref{control} satisfy $u_j(t)=h_j(\lambda(t))$ and the base system takes the form of
\begin{align}
\begin{split} \label{horiz} 
 \dot x_i&= h_i,\ \ \  i=1, \dots ,m  \\ 
\dot z_j&= -\frac{1}{2} \sum_{i=1}^{m} c_{ik}^j h_i x_k ,\ \ \  j=1, \dots ,n-m 
\end{split}
\end{align}
for $q=(x_i,z_i)$. Using $u_j(t)=h_j(\lambda(t))$ and the equation $\dot \lambda(t)=\vec{H}(\lambda(t))$ for the normal extremals, we write the fiber system as
\begin{align}
\begin{split}
\dot h_i&= -\sum_{l=1}^{m-n} \sum_{j=1}^m c_{ij}^l h_j w_l,\ \ \  i=1, \dots ,m,  \\ \label{ver_obec2}
\dot w_{j}&=0, \ \ \hspace{2.5cm}  j=1, \dots ,n-m  ,
\end{split}
\end{align}
where $c_{ij}^l$ are the structure constants of the Lie algebra $\mathfrak g$ for the basis $X_i$.
The solutions $w_i$, $i=1,\dots, n-m $ are constants that we denote by
\begin{align} \label{h567}
w_1=K_1,\ \dots ,w_{n-m}=K_{n-m}.
\end{align} 
If $K_1=\cdots=K_{n-m}=0$ then $h(t)=h(0)$ is a constant and the geodesic
 $(x_i(t),z_i(t))$ is a line in $G$ such that $z_i(t)=0$. If at least one of $K_i$ is non--zero, the first part of the fibre system \eqref{ver_obec2} forms a homogeneous system of ODEs 
 $
 \dot h = - \Omega h
 $
 with constant coefficients for $h=(h_1,\dots, h_m)^T$ and the system matrix $\Omega$. 
Its solution is given by  
$
h(t)= e^{-t \Omega} h(0)
$, where  $h(0)$ is the initial value of the vector $h$ at the origin.

\subsection{Left--invariant control problem with the growth vector $(3,6)$} 
\label{sec21} 
Let us consider three vector fields on $\R^6$ with the local coordinates $(x_1,x_2,x_3,z_1,z_2,z_3)$ in the form
\begin{align}
\begin{split}
X_1&=\partial_{x_1}+{x_3 \over 2}\partial_{z_2}-{x_2 \over 2}\partial_{z_3},\\
X_2&=\partial_{x_2}+{x_1 \over 2}\partial_{z_3}-{x_3 \over 2}\partial_{z_1},\\
X_3&=\partial_{x_3}+{x_2 \over 2}\partial_{z_1}-{x_1 \over 2}\partial_{z_2}.
\label{vf1}
\end{split}
\end{align} 
The only non--trivial Lie brackets are 
\begin{align}
\begin{split}
X_4=[X_1,X_2]= \partial_{z_3}, \ \  X_5=[X_1,X_3]= -\partial_{z_2}, \ \ X_6=[X_2,X_3]= \partial_{z_1}.
\end{split}
\end{align}
 These six vector fields determine a step $2$ nilpotent Lie algebra $\fm$ with the multiplication table given by Table \ref{tab1}.
 
\begin{table}[h] \label{liealg1}
\begin{center}
\begin{tabular}{ |c|| c | c | c |c |c |c |} 
\hline
$\fm$  & $X_1$ & $X_2$ & $X_3$ & $X_4$ & $X_5$ & $X_6$  \\
\hline \hline
$X_1$ & $0$ & $X_4$ & $X_5$ &$0$ &$0$ &$0$    \\ 
\hline
$X_2$ & $-X_4$ & $0$ & $X_6$ &$0$ &$0$ &$0$    \\ 
\hline
$X_3$ & $-X_5$ & $-X_6$ & $0$ &$0$ &$0$ &$0$    \\ 
\hline
$X_4$ & $0$ & $0$ & $0$ &$0$ &$0$ &$0$    \\ 
\hline
$X_5$ & $0$ & $0$ & $0$ &$0$ &$0$ &$0$    \\ 
\hline
$X_6$ & $0$ & $0$ & $0$ &$0$ &$0$ &$0$    \\ 
\hline
\end{tabular}
\end{center}
\caption{Lie algebra $\fm$}
\label{tab1}
\end{table}
There is a Carnot group $M$ such that the fields $X_i$, $i=1\dots,6$ are left--invariant for the corresponding group structure. When identified with $\mathbb R^6= \mathbb R^3 \oplus  \R^3$, the group structure on $M$ reads as
\begin{align}\label{grupa}
(x,z)\cdot (x',z')=(x+x',z+z'+{1 \over 2}x \times x')
 \end{align}
for $x=(x_1,x_2,x_3)$ and $z=(z_1,z_2,z_3)$, where $\times$ stands for the vector product on $\R^3$.
In particular, $\M=\langle X_1,X_2,X_3\rangle$ forms a $3$--dimensional left--invariant distribution on $M$. 
We define the left--invariant sub--Riemannian metric $g_M$ on $\M$ by declaring  $X_1,X_2,X_3$ orthonormal.

The geodesics of the control problem  are the solutions to the control system \eqref{horiz},\eqref{ver_obec2}, with $(m,n)=(3,6)$ and one can read the structure constants in Table \ref{tab1}. Hence, the fibre system is given by
$w_1=K_1, w_2=K_2, w_3=K_3$ for the constants $K_1,K_2,K_3$ and
$
\dot h = - \Omega h$ for 
 $h=(h_1,h_2,h_3)^T$ and 
\begin{align}
\Omega=
\left( \begin{smallmatrix} 
0 & K_1 & K_2 \\
-K_1 & 0 & K_3 \\
-K_2 &-K_3 & 0
\end{smallmatrix} \right)
\label{matrix}.
\end{align}
Its solution is given by the exponential  
$
h(t)= e^{-t \Omega} h(0)
$, where  $h(0)$ is the initial value of the vector $h$ at the origin.  We write an explicit formula for the general solution in terms of eigenvectors of \eqref{matrix}.  If at least one of the  constants $K_i$ is non--zero, the kernel of $\Omega$, i.e. zero--eigenspace, is one--dimensional, generated by  the vector $(K_3,K_2,K_1)^T$. Its orthogonal complement corresponds to the sum of eigenspaces to the eigenvalues $\pm iK$, where  $K:=\sqrt{K_1^2+K_2^2+K_3^2}$, and is generated by the vectors $(-K_1K_3,-K_1K_2,K_2^2+K_3^2)\pm i (K_2,-K_3,0)$. Thus solution to the fibre system can be written as
\begin{align}  \label{h36}
h(t)=(C_1\cos(Kt)-C_2\sin(Kt))v_1+(C_1\sin(Kt)+C_2\cos(Kt))v_2 +  C_3 v_3,
\end{align} 
where   $v_1,v_2,v_3$ is the eigenspace--adapted real orthonormal basis 
$$
v_1= \frac{1}{K \sqrt{K_2^2+K_3^2}}  \begin{pmatrix} -K_1K_3 \\ K_1 K_2 \\ K_2^2+K_3^2\end{pmatrix},
v_2=\frac{1}{ \sqrt{K_2^2+K_3^2}} \begin{pmatrix}  -K_2 \\  - K_3 \\ 0\end{pmatrix}, 
v_3=\frac{1}{K} \begin{pmatrix} K_3 \\- K_2 \\ K_1\end{pmatrix}$$
and $C_1,C_2,C_3$ are the constants that satisfy the level set condition $H=1/2$, i.e. $\|h(t)\|=1$, that reads
$C_1^2+C_2^2+C_3^2=1.$ 
Let us note that the choice $C_1=C_2=0$ leads to the constant solutions that are irrelevant as the control functions. Thus we assume that at least one of the constants $C_1,C_2$ is non--zero.
 
Let us emphasize that the base system \eqref{horiz} can be written in terms of a vector product as follows
\begin{align} \begin{split} \label{rr1} 
&\dot x = h, \\
&\dot z = {1 \over 2}x \times h
\end{split}
\end{align}
 for vectors $x=(x_1,x_2,x_3)^T$ and $z=(z_1,z_2,z_3)^T$. One obtains the general solution by substituting \eqref{h36} for $h$ and by consequent direct integration.
We are interested in the solutions passing through the origin, i.e. we impose the initial condition 
\begin{align}\label{ic1}
x_i(0)&=0,z_i(0)=0,\quad i=1,2,3.
\end{align}
However, it may be difficult to find the integration constants giving the geodesics through a fixed target point.


\subsection{Left--invariant control problem with the growth vector $(4,7)$} \label{sec22}
Let us consider four vector fields on $\R^7$ with the local coordinates $(x,\ell_1,\ell_2,\ell_3,y_1,y_2,y_3)$ in the form
\begin{align}
\begin{split}
&Y_0=\partial_x-{\ell_1 \over 2}{\partial_{y_1}}-{\ell_2 \over 2}{\partial_{y_2}}-{\ell_3 \over 2}{\partial_{y_3}},\\
&Y_1=\partial_{\ell_1}+{x \over 2} \partial_{y_1}, \ \ 
Y_2=\partial_{\ell_2}+{x \over 2} \partial_{y_2}, \ \ 
Y_3=\partial_{\ell_3}+{x \over 2} \partial_{y_3}.
 \label{vf}
\end{split}
\end{align}
The only non--trivial Lie brackets are 
\begin{align}
\begin{split}
Y_{4}=[Y_0,Y_1]= \partial_{y_1}, \ \ Y_{5}=[Y_0,Y_2]= \partial_{y_2}, \ \ Y_{6}=[Y_0,Y_3]= \partial_{y_3}
\end{split}.
\end{align}
 These seven fields determine a step $2$ nilpotent Lie algebra $\fn$ with the multiplication table given by Table \ref{tab}.
\begin{table}[h] \label{liealg}
\begin{center}
\begin{tabular}{ |c|| c | c | c |c |c |c |c |} 
\hline
 $\fn$ & $Y_0$ & $Y_1$ & $Y_2$ & $Y_3$ & $Y_{4}$ & $Y_{5}$ & $Y_{6}$ \\
\hline \hline
$Y_0$ & $0$ & $Y_{4}$ & $Y_{5}$ &$Y_{6}$ &$0$ &$0$ &$0$   \\ 
\hline
$Y_1$ & $-Y_{4}$ & $0$ & $0$ &$0$ &$0$ &$0$ &$0$   \\ 
\hline
$Y_2$ & $-Y_{5}$ & $0$ & $0$ &$0$ &$0$ &$0$ &$0$   \\ 
\hline
$Y_3$ & $-Y_{6}$ & $0$ & $0$ &$0$ &$0$ &$0$ &$0$   \\ 
\hline
$Y_{4}$ & $0$ & $0$ & $0$ &$0$ &$0$ &$0$ &$0$   \\ 
\hline
$Y_{5}$ & $0$ & $0$ & $0$ &$0$ &$0$ &$0$ &$0$   \\ 
\hline
$Y_{6}$ & $0$ & $0$ & $0$ &$0$ &$0$ &$0$ &$0$   \\
\hline 
\end{tabular}
\end{center}
\caption{Lie algebra $\fn$
}
\label{tab}
\end{table}

There is a Carnot group $N$ such that the fields $Y_i$, $i=1,\dots,7$ are left--invariant for the corresponding group structure. 
The group structure on $N$, when identified with $\mathbb R^7= \mathbb R \oplus \mathbb R^3 \oplus \mathbb R^3$, yields
\begin{align}\label{grupa2}
(x,\ell,y)\cdot (x',\ell',y')=(x+x',\ell+\ell',y+y'+{1 \over 2}\ell \times \ell').
\end{align}
for $\ell=(\ell_1,\ell_2,\ell_3)$ and  
 $y=(y_1,y_2,y_3)$.
In particular, $\N=\langle Y_0,Y_1,Y_2,Y_3\rangle$ forms a $4$--dimensional left--invariant distribution on $N$. 
Moreover, there is a natural decomposition
\begin{align}
\label{decom}
\N=\langle Y_0 \rangle \oplus \langle Y_1, Y_2, Y_3 \rangle 
\end{align}
into a $1$--dimensional distribution and a $3$--dimensional involutive distribution,  both left--invariant.
We define the left--invariant sub--Riemannian metric $g_N$ on $\N$ by declaring $Y_0$, $Y_1$, $Y_2$, $Y_3$ orthonormal.

The geodesics of the control problem  are solutions to the control system \eqref{horiz},\eqref{ver_obec2}, with $(m,n)=(4,7)$ and  we read the structure constants in Table \ref{tab}. Hence, the first part of the fibre system \eqref{ver_obec2} is given by
$w_1=K_1, w_2=K_2, w_3=K_3$, where $K_1,K_2,K_3$ are constants. Second part of the fibre system takes form
$
\dot h = - \Omega h,
$
where $h:=(h_0,h_1,h_2,h_3)^T$ and
\begin{align}
\Omega=
\left( \begin{smallmatrix} 
0 & K_1 & K_2 &K_3 \\
-K_1 & 0 & 0&0 \\
-K_2 & 0 & 0 & 0\\
-K_3 & 0 & 0 & 0
\end{smallmatrix} \right).
\label{matrix2}
\end{align}
Its solution is given by  
$
h(t)= e^{-t \Omega} h(0)
$, where  $h(0)$ is the initial value of the vector $h$ at the origin and we write its explicit form in terms of the eigenvectors of \eqref{matrix2}. If $K_1=K_2=K_3=0$ then $h(t)=h(0)$ is a constant and the geodesic   $(x(t),\ell_i(t),y_i(t))$ is a line in $N$ such that $y_i=0$. 
If at least one of the constants $K_i$ is non--zero, 
the kernel of $\Omega$, i.e. zero--eigenspace, is two--dimensional and is generated by the vectors $(0,-K_3,0,K_1)^T$ and $(0,-K_2,K_1,0)^T$. Its orthogonal complement corresponds to the sum of the eigenspaces  to the eigenvalues $\pm iK$, where $K:=\sqrt{K_1^2+K_2^2+K_3^2}$, and is generated by the eigenvectors 
$(0,K_1,K_2,K_3)^T  \pm i(K,0, 0,0)^T$. Thus the solution to the vertical system for non--zero $K$ takes form   
\begin{align} 
\begin{split}\label{h1}
h_0&=K(C_2\cos(Kt)-C_1\sin(Kt)) \\ 
\bar h& =K(C_2\sin(Kt)+C_1\cos(Kt))
r_1
+Cr_2
\end{split} 
\end{align}
where  $\bar h= (h_1,h_2,h_3)^T$ and  $r_1,r_2$ are eigenspace--adapted real orthonormal vectors
$$
r_1=\frac{1}{K}\left( \begin{smallmatrix} 
K_1\\
K_2\\
K_3
\end{smallmatrix} \right), \ \ \  
r_2= \frac{1}{C}  \left(C_3   \left( \begin{smallmatrix} 
-K_3\\0
\\
K_1
\end{smallmatrix} \right) +
  C_4 \left( \begin{smallmatrix} 
-K_2\\
K_1\\
0
\end{smallmatrix} \right) \right)
$$
with the constants $C_1,C_2,C_3,C_4$ and the normalization factor 
$$C= \sqrt{(C_3K_3 + C_4K_2)^2+K_1^2(C_3^2 + C_4^2)}.$$ 
The level set condition $\|h(t)\|=1$ reads $C_1^2+C_2^2+C_3^2=1.$
Let us note that the choice $C_1=C_2=0$ leads to the constant solutions that are irrelevant as the control functions. Thus we assume that at least one of the constants $C_1,C_2$ is non--zero.

The base system \eqref{horiz} takes the explicit form of
\begin{align}
\begin{split} \label{xl}
&\dot x=h_0, \\
&\dot{ \ell}=\bar h, \\
&\dot{y}= {1 \over 2}(x  \bar h -h_0 \ell).
\end{split}
\end{align}
We are interested in the solutions passing through the origin, i.e. we impose the initial condition 
\begin{align}\label{ic2}
x(0)=0,\ell_i(0)=0,y_i(0)=0,\quad i=1,2,3.
\end{align} 
By substitution of \eqref{h1}, the system \eqref{xl} can be directly integrated. 
Again, it may be difficult to find the geodesics through a fixed target point.
In the section \ref{GAapproach} we show how the symmetries of the system and the geometric algebra approach is used for finding a geodesic towards a given point.

\subsection{Symmetries of the control systems} \label{sec3}

Symmetries of the control system in question coincide with the symmetries of the corresponding left--invariant sub--Riemannian structure $(M,\M, g_M)$ and $(N,\N, g_N)$, respectively. These are precisely the automorphisms on groups preserving the distributions and sub--Riemannian metrics. 
The group $SO(3)$ acts on $\mathbb R^3$ and preserves the vector product which implies the following statement. 

\begin{prop*} \label{akce}
For each $R \in SO(3)$, the map 
\begin{align}
\begin{split} 
(x,z) \mapsto (Rx, Rz)
\end{split}
\label{action}
\end{align}
maps the geodesics of the system from Section \ref{sec21} starting at the origin to the geodesics starting at the origin.
For each $R \in SO(3)$ the map 
\begin{align}
\begin{split} 
(x,\ell,y) \mapsto (x, R \ell, Ry)
\end{split}
\label{action2}
\end{align}
maps the geodesics of the system from Section \ref{sec22} starting at the origin to the geodesics starting at the origin.
\end{prop*}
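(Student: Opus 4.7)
The plan is to observe that in each case the given map is a Lie group automorphism of the corresponding Carnot group that preserves both the horizontal distribution and the sub--Riemannian metric; any such map is a sub--Riemannian isometry and consequently sends geodesics to geodesics. Preservation of the origin is automatic since $R\cdot 0=0$, which is the neutral element in both $M$ and $N$. This is cleaner than handling the base and fibre systems separately, although either approach works.

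For the $(3,6)$ system, I would first check that $(x,z)\mapsto (Rx,Rz)$ is a group automorphism of $(M,\cdot)$ with the law \eqref{grupa}. The only non--additive term in \eqref{grupa} is $\tfrac{1}{2}x\times x'$, so the verification reduces to the identity $R(x\times x')=(Rx)\times (Rx')$, which holds for every $R\in SO(3)$. Next, at the identity $(0,0)$ the distribution $\M_e$ is spanned by $\partial_{x_1},\partial_{x_2},\partial_{x_3}$, and the differential of our map rotates this subspace into itself isometrically. By left--invariance of both $\M$ and $g_M$, preservation at the identity propagates to all points, so the map is a sub--Riemannian isometry; in particular it permutes geodesics through the origin.

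For the $(4,7)$ system the same strategy applies. The map $(x,\ell,y)\mapsto (x,R\ell,Ry)$ is an automorphism of $(N,\cdot)$ given by \eqref{grupa2} precisely because $R(\ell\times\ell')=(R\ell)\times (R\ell')$. At the identity, $\N_e$ is spanned by $\partial_x,\partial_{\ell_1},\partial_{\ell_2},\partial_{\ell_3}$; the differential fixes $\partial_x$ and rotates the remaining three orthonormal vectors among themselves, which respects the decomposition \eqref{decom}, so the distribution and the metric $g_N$ are preserved at the identity, hence everywhere by left--invariance.

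The only nontrivial step is the automorphism check, and it is entirely contained in the $SO(3)$--equivariance of the cross product, already invoked in the paragraph preceding the statement. I do not expect any real obstacle beyond this bookkeeping: once the map is identified as a sub--Riemannian isometry fixing the origin, the conclusion that origin--based geodesics map to origin--based geodesics is standard and follows, e.g., from the Hamiltonian characterization in \eqref{ham}, as $R$ acts linearly on the fibres of $T^*G$ and preserves the quadratic form $H$.
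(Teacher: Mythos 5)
Your proof is correct, but it follows a genuinely different route from the paper. The paper's proof is a one--liner: geodesics are characterized by the systems \eqref{rr1} and \eqref{xl} (together with the fibre system), and these ODEs are equivariant under the stated action of $R$, essentially because $R(x\times h)=(Rx)\times(Rh)$; hence solutions map to solutions. You instead argue at the level of the sub--Riemannian structure: the maps are group automorphisms (again via equivariance of the cross product in the group law), they preserve the distribution and metric at the identity, hence everywhere by left--invariance, so they are isometries fixing the neutral element and therefore permute origin--based geodesics. Your version is more structural and in fact matches the framing sentence opening Section \ref{sec3} (``symmetries $=$ automorphisms preserving the distribution and the sub--Riemannian metric''); it proves the slightly stronger statement that these maps are symmetries of $(M,\M,g_M)$ and $(N,\N,g_N)$, and it sidesteps a point the paper's terse proof leaves implicit, namely that the rotated momentum $Rh$ again solves the fibre system \eqref{ver_obec2} with rotated constants $K_i$ and the same level set $\|h\|=1$. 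The price is that you must invoke the standard facts that automorphisms intertwine left translations and that sub--Riemannian isometries preserve (normal) geodesics, which the paper avoids by staying entirely inside the explicit ODEs. One small caveat: in the $(4,7)$ case the commutator term actually induced by the fields \eqref{vf} is $\tfrac12(x\ell'-x'\ell)$ rather than the $\tfrac12\,\ell\times\ell'$ printed in \eqref{grupa2}; your automorphism check goes through either way (by linearity of $R$ in the first case, by $SO(3)$--equivariance of $\times$ in the second), but you should not lean on the printed formula without noting this.
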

\begin{proof} Follows from the 
invariance of \eqref{rr1} and \eqref{xl} with respect to the action of $R \in SO(3)$. 
\end{proof}

\section{Geometric algebra}
The construction of the universal real geometric algebra is well-known \cite{hil1, per, Lounesto, hv}. We provide only a brief description in a special case $\mathbb G_m$ that we use later.  In general, geometric algebras are based on symmetric bilinear forms of arbitrary signature. Here, we deal with the real vector space $\R^m$ endowed with a positive definite symmetric bilinear form $B$ only. 

\subsection{Geometric product}
Let us consider a positive definite symmetric bilinear form $B$ on $\mathbb R^m$  and the associated orthonormal basis $(e_1,\dots, e_m)$ ,  i.e.  
\begin{align*}
B(e_i,e_j)=\begin{cases}
1 &\text{ if } i=j\\
0  &\text{ if } i\neq j
\end{cases}
\; \text{ where } 1\leq i,j\leq m.
\end{align*}
The Grassmann algebra $\Lambda(\mathbb{R}^m)$ is an associative algebra with the anti-symmetric outer product $\wedge$ defined by the rule 
\begin{align*}
e_i \wedge e_j + e_j \wedge e_i =0 \; \text{ for } 1\leq i,j \leq m.
\end{align*}
The Grassmann blade of grade $r$ is  $e_A=e_{i_1} \wedge \cdots \wedge e_{i_r}$, where the multi-index $A$ is a set of indices ordered in the natural way $1\leq i_1\leq \cdots \leq i_r\leq m,$ and we put $e_\emptyset=1.$ Blades of grades $0\leq r \leq m$ form the basis of the 
Grassmann algebra $\Lambda(\mathbb{R}^m)$ and  for the  outer product we have
\begin{align*}
e_j \wedge e_A = \begin{cases}
e_j \wedge e_{i_1} \wedge \cdots \wedge e_{i_r} & \text{ if } j\notin A, \\
0  & \text{ if } j\in A
\end{cases}
\end{align*}
and $1 \wedge e_A=e_A$.
For the vectors from $\mathbb R^m$, the inner product $e_i \cdot e_j= B(e_i,e_j)$ and the outer product $e_i \wedge e_j $ lead to the so-called geometric product \begin{align*}
e_i e_j= e_i\cdot e_j + e_i \wedge e_j, \;\;  1\leq i,j \leq m.
\end{align*}
The  definitions of inner  and geometric products then extend to blades of the grade $r$ as follows. For the inner product we put $1 \cdot e_A =0$ and
\begin{align*}
e_j \cdot e_A = e_j \cdot (e_{i_1} \wedge \cdots \wedge e_{i_r})= \sum_{k=1}^r (-1)^k B(e_j,e_{i_k}) e_{A\setminus \{i_k\}},
\end{align*}
where $e_{A\setminus \{i_k\}}$ is the blade of grade $r-1$ created by deleting $e_{i_k}$ from $e_A$. This product is also called the left contraction in literature.
For the geometric product we define
\begin{align*}
e_je_A=e_j\cdot e_A + e_j \wedge e_A.
\end{align*} 
These definitions extend linearly to the whole vector space $\Lambda(\mathbb{R}^m)$.
Thus we get an associative algebra over this vector space, the so-called real geometric algebra, denoted by $\mathbb{G}_{m}$.  Note that this algebra is naturally graded; the grade zero and grade one elements are identified with $\mathbb{R}$ and $\mathbb{R}^m$, respectively. 
Finally, we can define the norm of a blade as the magnitude of the blade $|e_A|= \sqrt{ e_A \cdot \tilde e_A }$. Note that $ e_A \cdot \tilde e_A $, where $e_A \neq 0$ is always positive in $\mathbb{G}_{m}$.

\subsection{Objects}
The vectors in $\mathbb R^m$ with the coordinates $(x_1,\dots,x_m)$ are given by
$
x= x_1 e_1 + \cdots +x_m e_m
$
and the square with respect to the geometric product $x^2=x_1^2  + \cdots  +x_m^2 \in \mathbb R$ coincides with the square of the Euclidean norm of $x$. A vector $x$ represents a one-dimensional subspace (line) $p$ in $\mathbb R^m$ given by the scalar multiples of $x$ which in $\mathbb{G}_m$ is expressed by the formula $u \in p \Leftrightarrow u \wedge x=0$. In the same way, a plane $\pi$ generated by two vectors $x$ and $y$ is represented by $x \wedge y$ in the sense  $u \in \pi \Leftrightarrow u \wedge x \wedge y=0$. In general, any $r$-dimensional subspace $V_r\subseteq \mathbb{R}^m$ is represented by a blade $A_r$ of grade $r$ such that
\begin{align} \label{NO} 
V_r=NO(A_r) = \{x \in \mathbb R^m : x \wedge A_r =0 \}.
\end{align}
 Such a representation is called the outer product null space (OPNS) representation in the literature. In particular, the whole space $\mathbb{R}^m$ is represented by a blade of maximal grade, so called pseudoscalar. Similarly one defines the inner product null space (IPNS) representation $A^*_{m-r}$ of $V_r$ as a blade of grade $m-r$  such that $x\in V_r \Leftrightarrow x \cdot A^*_{m-r} =0$. 
 The OPNS and IPNS representations are mutually dual with respect to the duality on $\mathbb{G}_m$ defined by the multiplication by pseudoscalar, namely   $$A^*=AI,$$  where $A$ is a blade and $I$ is the pseudoscalar. 
 Indeed,  one can show that $(x \wedge A)I=x \cdot (AI)$ for each vector $x \in \mathbb R^m$, in particular $$x \wedge A =0 \Leftrightarrow x \cdot A^* =0.$$

\begin{rem*}
OPNS representations of objects in $\mathbb G_3$ are summarized  in Table \ref{blades}. 
 For example,
  the plane generated by the vectors $u$, $v$ has the OPNS representation $u \wedge v$. Its IPNS representation $(u \wedge v)^*$ is a vector perpendicular to the plane.   More specifically, for the pseudoscalar $I=e_1 \wedge e_2 \wedge e_3 = e_1e_2e_3 $ we receive the usual vector product  in geometric algebra form as
 \begin{align} \label{we-cr}
 u \times v = - (u \wedge v)I.
 \end{align}
 \end{rem*}
 
 \begin{table}[h] 
 	\begin{center}
 		\begin{tabular}{ |c| c | c | c |c |} 
 			\hline
 			grade & name & blades & dimension & objects  \\
 			\hline 
 			0& scalars & 1 & 1 & numbers  \\
 			\hline
 			1 & vectors & $e_1, \: e_2, \:e_3$ & 3  & lines \\
 			\hline
 			2 & bivectors & $e_1 \wedge e_2, \: e_1 \wedge e_3, \:e_2 \wedge e_3$ & 3  & planes \\
 			\hline
 			3 & pseudoscalars & $e_1 \wedge e_2 \wedge e_3$ & 1 & volume forms \\
 			\hline
 		\end{tabular}
 	\end{center}
 	\caption{Blades of geometric algebra $\mathbb G_3$}
 \label{blades}
 \end{table}

\subsection{Transformations}
 
Let us fix a vector $n \in  \wedge^1  \mathbb R^m  \subset \mathbb G_m$ such that $n \cdot n = n^2=1$ and $x \in  \wedge^1  \mathbb R^m  \subset \mathbb G_m$ arbitrary.
The negative conjugation $-nxn$ defines the reflection with respect to the hyperplane orthogonal to $n$, 
because
\begin{align*} 
-n  x^{\perp}  n &= -(n \wedge  x^{\perp} )n=
(x^{\perp}   \wedge n )n =  x^{\perp}   n n=  x^{\perp},\\
-n x^{||}  n &=-x^{||},
\end{align*}
 where  $x=x^{||} + x^{\perp}$ is the orthogonal decomposition of $x$ with respect to $n$.
 Conjugation preserves the grades of blades and is an outermorphism $ n (u_1 \wedge \cdots \wedge u_l )n  =(n u_1 n) \wedge \cdots \wedge (n u_l n)  $ for any vectors $u_1, \dots , u_l$, thus minus the conjugation is the (anti)outermorphism depending on the dimension $m$.
 Since each rotation is a composition of two reflections,  a rotation in $\mathbb G_m$ is represented by the conjugation with respect to the geometric product of two vectors. To find a rotor between vectors $x$ and $y$ we have a nice formula at hand. 
 
 \begin{lemma*}\label{rot2} 
 Let $x$ and $y$ be the unit vectors in $\mathbb G_m$, i.e. $x,y \in \wedge^1 \mathbb G_m$, then the
 formula 
 \begin{align} \label{rot} 
 R_{xy} = \widehat{1 +  y  x },
 \end{align}  
 where the hat symbol stands for the normalization $\widehat{u} = {u}/{\sqrt{u \cdot u}},$
 defines the rotation in the plane $x \wedge y$ which maps vector $ x$ to $ y$
 and acts trivially on $(x \wedge y)^*$.
 \end{lemma*}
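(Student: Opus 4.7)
The plan is to exploit the factorization $1 + yx = y(y+x)$, which is valid because $y^2 = 1$. After normalization this presents $R_{xy}$ as the geometric product of two unit vectors, namely $y$ and $\widehat{y+x}$. Since (by the paragraph preceding the lemma) conjugation by the geometric product of two unit vectors implements a composition of two hyperplane reflections and therefore a rotation, this factorization already identifies $R_{xy}$ as a rotor. It remains to determine which rotation.

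First I would check that the normalizations match: a direct computation of the reverse gives $\widetilde{R_{xy}} = 1 + xy$, and then $R_{xy}\widetilde{R_{xy}} = 2 + xy + yx = 2(1 + x\cdot y)$. Since $(y+x)^2 = 2 + xy + yx = 2(1+ x\cdot y)$ as well, the factors $y$ and $\widehat{y+x}$ inherit the correct normalization and indeed $\widehat{1+yx} = y \, \widehat{y+x}$, so that conjugation by $R_{xy}$ equals conjugation by the unit $y$ composed with conjugation by $\widehat{y+x}$, with appropriate signs from the reflection formula $v \mapsto -nvn$.

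Second I would identify the geometric effect of the two reflections. Because $y+x$ bisects the angle between $x$ and $y$, reflection in the hyperplane orthogonal to $\widehat{y+x}$ sends $x$ to $-y$; reflection in the hyperplane orthogonal to $y$ then sends $-y$ to $y$. The composition therefore sends $x$ to $y$, as claimed. For the second claim, take any vector $v$ lying in $(x \wedge y)^*$, i.e.\ orthogonal to both $x$ and $y$; then $v \cdot y = 0$ and $v \cdot (y+x) = 0$, so each of the two reflections fixes $v$ pointwise, and so does their composition. Hence $R_{xy}$ acts trivially on $(x\wedge y)^*$, which together with the previous step forces the rotation to lie in the plane $x \wedge y$.

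The computation itself is routine; the only real subtlety is the degenerate case $y = -x$, where $x \cdot y = -1$ and $R_{xy}\widetilde{R_{xy}} = 0$, so the normalization in \eqref{rot} breaks down. This is geometrically natural: the rotation sending $x$ to $-x$ is a half turn and is not determined by the (now degenerate) plane spanned by $x$ and $y$, so this case must be excluded from the statement's hypotheses or handled separately. Apart from this, the argument reduces entirely to the identities $x^2 = y^2 = 1$ and the factorization $1 + yx = y(y+x)$.
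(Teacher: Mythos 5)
Your argument is correct, and it takes a genuinely different route from the paper. The paper's proof starts from the identity $yx=\cos\theta+\sin\theta\,\widehat{y\wedge x}$ (the angle form of the geometric product) and then pushes through half--angle trigonometric identities to show $\widehat{1+yx}=\cos(\theta/2)+(y\wedge x)\sin(\theta/2)$, i.e.\ the rotor of the rotation by $\theta$ in the plane $x\wedge y$; the triviality on $(x\wedge y)^*$ is then checked by the commutation $xy\,(x\wedge y)^*\,yx=xyyx\,(x\wedge y)^*$. You instead use the factorization $1+yx=y(y+x)$ to exhibit $R_{xy}$ as a product of two unit vectors and read off the rotation as the composition of the reflection in the hyperplane orthogonal to the bisector $\widehat{y+x}$ (sending $x$ to $-y$) with the reflection in the hyperplane orthogonal to $y$; this is entirely algebraic, avoids trigonometry, and, as a bonus, makes the degenerate case $y=-x$ (where the normalization $R_{xy}\widetilde{R_{xy}}=2(1+x\cdot y)$ vanishes) transparent --- a case the paper's statement silently excludes. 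Interestingly, the Remark that the paper places right after the lemma carries out essentially your computation $(1+yx)\,x\,(1+xy)=(2+2\cos\theta)\,y$ as an independent verification, so your route promotes that check to the main proof. What the paper's trigonometric route buys in exchange is the explicit half--angle expression for the rotor, which is useful in its own right; your route buys a shorter, coordinate-- and angle--free argument and a clean treatment of the orthogonal complement (both reflections fix every vector orthogonal to $x$ and $y$ pointwise).
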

 \begin{proof} 
 Multiplication of two vectors $x, y \in  \wedge^1  \mathbb R^m  \subset   \mathbb G_m$, such that  $x^2=y^2=1,$ defines a multivector 
 $$
 yx = \cos(\theta) + \sin(\theta) \widehat{y\wedge x}, 
 $$
 where $\theta$ is the angle between $x$ and $y$\cite[Lemma 4.2]{per}.
The conjugation by such multivector $yx$ represents the rotation in the plane $x \wedge y$ with respect to angle $2 \theta$ in the positive way. Using standard trigonometric formulas  we can see by straightforward calculation that   
 \begin{align*}
 \begin{split} 
 R_{xy} &= \widehat{1 +   yx} = 
 \frac{1 + \cos(\theta)  + \widehat{y\wedge x} \sin(\theta)}{\sqrt{(1 +\cos(\theta))^2+\sin^2(\theta)}}
 = 
 \frac{1 + \cos(\theta)  + \widehat{y\wedge x} \sin(\theta)}{\sqrt{2 +2\cos(\theta)}} \\
 &
 = 
 \sqrt{ \frac{1 + \cos(\theta)}{2}}  +\widehat{y\wedge x} \sqrt{\frac{1-\cos^2(\theta)}{2(1 +\cos(\theta))}}
 = 
 \sqrt{ \frac{1 + \cos(\theta)}{2}}  + \widehat{y\wedge x} \sqrt{\frac{1-\cos(\theta)}{2}} \\
 &= \cos(\frac{\theta}{2}) + ( y \wedge x) \sin(\frac{\theta}{2}).
 \end{split} 
 \end{align*} 
 So $R_{xy}$ is the rotation in the plane $\widehat{x\wedge y}$ in the positive way about the angle between $x$ and $y$ so the vector $x$ goes to the vector $y$.

 Finally, $(x \wedge y)^*$ is orthogonal to $x$ and $y$ 
 and the straightforward  computation 
$ xy (x \wedge y)^*   yx = xyyx(x \wedge y)^* = (x \wedge y)^*$  proves the rest of the statement.
\end{proof}
%

%
%
%


\begin{rem}
One can see that $yxy$ is an axial symmetry with respect to $y$ and thus $x+yxy =2 \cos (\theta) y$. We can compute the square of the norm
\begin{align*}
    (1+yx)(1+xy) = 1+xy+yx+1=2+2 \cos(\theta)
\end{align*}
and with the help of the geometric product, we  compute 
\begin{align*}
    (1+yx) x(1+xy) &= (x+y) (1+xy) = (x+2y+yxy) =
    (2+ 2\cos(\theta))y 
\end{align*}
so the conjugation by \eqref{rot} maps $x$ to  $y$. 
\end{rem}

\subsection{Rotor construction}

Let $(x_1,\dots, x_{m})$ and $(y_1,\dots, y_{m})$ be a pair of bases of $\R^m$ such that 
\begin{enumerate}
\item
 $x_i \cdot x_j = y_i \cdot y_j$  for all $i,j=1,\dots , m$, i.e. all the scalar products are equal, and 
\item
 $x_1 \wedge  \cdots  \wedge  x_m
=y_1 \wedge  \cdots  \wedge  y_m$, i.e. the pseudoscalars are equal.  
\end{enumerate}
Let us remind that a complete flag $\{V\}$ in an increasing sequence of subspaces of the vector space $\mathbb R^m$ 
$$\{0\}\subset V_{1}\subset V_{2}\subset \cdots \subset V_{m}=\mathbb R^m,$$
such that $\text{dim}(V_i)=i$. 
We use the complete flags to find the explicit rotation $R$ such that $R x_i \tilde R =y_i$ for all $i=1,\dots m$. Our method can be summarized as follows:
\begin{itemize}
\item  We consider the complete flags $\{V\}$ and $\{W\}$
 by setting $V_i = \langle x_1 , \dots, x_i\rangle = NO(x_1  \wedge  \cdots \wedge x_i)$ and $W_i = \langle y_1 , \dots, y_i\rangle = NO(y_1  \wedge  \cdots \wedge y_i)$, respectively. 
 \item We map the complete flag $\{ V \}$ to the complete flag  $\{ W \}$ inductively in $m$ steps. In the $j^\text{th}$ step, we assume $V_i = W_i$ for $i>j$ and we find the rotation $R_i$ such that $R_i V_i \tilde R_i = W_i$ for $i>j-1$. 
\end{itemize}
Before we formulate the construction in detail, we need several technical lemmas.

\begin{lemma*}\label{l2}
	Let $(x_1,\dots, x_{m})$ and $(y_1,\dots, y_{m})$ be a pair of bases such that 
	\begin{enumerate}
\item 	$x_i \cdot x_j = y_i \cdot y_j$ for all $i,j=1,\dots,m$, 
	and 
	\item $x_1 \wedge  \cdots  \wedge  x_i	=y_1 \wedge  \cdots  \wedge  y_i$ for all $i=1,\dots,m$.
	\end{enumerate}
	If $\{V \}$ and $\{ W \}$ are the corresponding complete flags, respectively, and if $V_i=W_i$ for all $i=1,\dots , m,$  then $x_i=y_i$ for all $i=1,\dots ,m$.
\end{lemma*}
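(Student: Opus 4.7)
My plan is to prove the statement by induction on $i$, leveraging both hypotheses of the lemma simultaneously: hypothesis (2) (equality of the partial wedge products) to localize $x_i-y_i$ inside $V_{i-1}$, and hypothesis (1) (equality of Gram entries) to kill the remaining coefficients.

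For the base case $i=1$, hypothesis (2) applied with index $i=1$ says $x_1=y_1$ directly, so there is nothing to prove. Note also that hypothesis (2) already forces $V_i=W_i$ for all $i$, because each is the OPNS of the blade $x_1\wedge\cdots\wedge x_i=y_1\wedge\cdots\wedge y_i$, so the separately stated condition $V_i=W_i$ is automatic; I will not need it as an independent input.

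For the inductive step, suppose $x_k=y_k$ for all $k<i$. Taking the ratio of the two consecutive wedge identities from (2),
\begin{align*}
x_1\wedge\cdots\wedge x_{i-1}\wedge x_i
=y_1\wedge\cdots\wedge y_{i-1}\wedge y_i
=x_1\wedge\cdots\wedge x_{i-1}\wedge y_i,
\end{align*}
so $x_1\wedge\cdots\wedge x_{i-1}\wedge(x_i-y_i)=0$. This means $x_i-y_i$ lies in $V_{i-1}$, hence can be written
\begin{align*}
x_i-y_i=\sum_{k=1}^{i-1}c_k\,x_k
\end{align*}
for some scalars $c_k$. This is the place where hypothesis (2) carries all the weight, and is the only genuinely geometric-algebraic step in the argument.

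Now I apply hypothesis (1) together with the inductive hypothesis. For each $k<i$, using $x_k=y_k$ and $x_k\cdot x_i=y_k\cdot y_i$,
\begin{align*}
x_k\cdot(x_i-y_i)=x_k\cdot x_i-x_k\cdot y_i=y_k\cdot y_i-y_k\cdot y_i=0.
\end{align*}
Substituting the expansion above gives $\sum_{k'=1}^{i-1}(x_k\cdot x_{k'})c_{k'}=0$ for every $k<i$. The coefficient matrix is the Gram matrix of the linearly independent vectors $x_1,\dots,x_{i-1}$ with respect to the positive-definite form, hence invertible, forcing all $c_{k'}=0$ and thus $x_i=y_i$. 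I do not anticipate a real obstacle here; the only subtlety is being clean about which hypothesis is used where, and remembering that positive definiteness of the form (standing assumption of Section 3) is what makes the Gram matrix invertible.
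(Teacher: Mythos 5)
Your proof is correct, and while it follows the same inductive skeleton as the paper's ($x_1=y_1$ immediately, then recover each $x_i$ from the data at level $i$), the inductive step is carried out by a genuinely different mechanism. The paper reasons geometrically: $x_i$ and $y_i$ lie in the same subspace $V_i=W_i$, have the same length, make the same angles with the already-identified earlier vectors, and the wedge condition fixes the remaining orientation ambiguity; this is written out only for $i=2$ and closed with ``and so on''. You instead use the level-$i$ wedge identity, after substituting $y_k=x_k$ for $k<i$, to obtain $x_1\wedge\cdots\wedge x_{i-1}\wedge(x_i-y_i)=0$, hence $x_i-y_i\in V_{i-1}$, and then show $x_i-y_i$ is $B$-orthogonal to each $x_k$ with $k<i$, so that positive definiteness (invertibility of the Gram matrix of $x_1,\dots,x_{i-1}$) forces $x_i-y_i=0$. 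This buys a uniform, fully explicit induction with no orientation case analysis, and it isolates exactly where each hypothesis enters: (2) localizes the difference in $V_{i-1}$, (1) kills it. Your side remark that hypothesis (2) already implies $V_i=W_i$ (both being outer product null spaces of equal blades) is also correct, so the flag hypothesis in the lemma is redundant as an independent input; in the paper it is carried along because the lemma is applied right after Lemma~2b, which produces the wedge identities from the flag equalities.
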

\begin{proof}
The equality $x_1=y_1$ holds trivially from the assumptions. Then $x_2 \cdot x_2 = y_2 \cdot y_2 $ reads that $x_2,y_2$ are of the same length,
$x_2 \cdot x_1 = y_2 \cdot y_1 $ reads that the angles between $x_1,x_2$ and $y_1,y_2$ are identical and $x_1 \wedge x_2 = y_1 \wedge y_2$ reads that they have the same orientation. Then $V_2=W_2$ and $x_1=y_1$ imply $x_2=y_2$
 and so on for all the basis vectors.  \end{proof}

\begin{lemma*}\label{l2b}
	Let $(x_1,\dots, x_{i},z)$ and $(y_1,\dots, y_{i},z)$ be a pair of sets of independent vectors such that 
	$x_1 \wedge  \cdots  \wedge  x_i \wedge	z =y_1 \wedge  \cdots  \wedge  y_i \wedge	z$. 
	If $NO(x_1 \wedge  \cdots  \wedge  x_i) = NO(x_1 \wedge  \cdots  \wedge  x_i)$  then $x_1 \wedge  \cdots  \wedge  x_i	=y_1 \wedge  \cdots  \wedge  y_i$.
\end{lemma*}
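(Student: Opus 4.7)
The plan is to reduce the statement to the classical fact that an $i$-dimensional subspace determines its Grassmann blade up to a nonzero scalar, and then to use the appended vector $z$ to pin down this scalar to be $1$.

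First, I would record the standard fact that two nonzero blades $A=x_1\wedge\cdots\wedge x_i$ and $B=y_1\wedge\cdots\wedge y_i$ with the same outer product null space must be proportional, i.e.\ there is a nonzero $\lambda\in\mathbb{R}$ with $B=\lambda A$. This is essentially the uniqueness of the top exterior power of a finite-dimensional subspace: because $NO(A)=NO(B)$ is $i$-dimensional and both $A,B$ are nonzero elements of its one-dimensional top exterior power $\Lambda^i NO(A)$, they differ only by a scalar.

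Next, I would exploit the hypothesis $x_1\wedge\cdots\wedge x_i\wedge z = y_1\wedge\cdots\wedge y_i\wedge z$. Writing this as $A\wedge z = B\wedge z$ and substituting $B=\lambda A$ yields $(1-\lambda)\,A\wedge z = 0$. By the assumed independence of the set $(x_1,\dots,x_i,z)$, the vector $z$ does not lie in $NO(A)=\langle x_1,\dots,x_i\rangle$, so $A\wedge z\neq 0$. Hence $\lambda=1$ and the desired equality $x_1\wedge\cdots\wedge x_i = y_1\wedge\cdots\wedge y_i$ follows.

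I expect the only real obstacle to be justifying the proportionality step cleanly without invoking machinery beyond what the paper has introduced; one can argue it directly by expressing $y_1,\dots,y_i$ in the basis $x_1,\dots,x_i$ of their common span and using antisymmetry of $\wedge$ to collect the result as $\det(T)\,A$ where $T$ is the change-of-basis matrix, so $\lambda=\det(T)$. Everything else is a single-line computation.
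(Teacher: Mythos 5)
Your proof is correct and follows essentially the same route as the paper's: both establish that the two blades are proportional (say $B=\lambda A$) because they represent the same subspace, then wedge with $z$ and use the independence hypothesis to force $\lambda=1$. Your extra remark justifying the proportionality via the change-of-basis determinant is a detail the paper simply asserts, so if anything your version is slightly more complete.
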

\begin{proof}
The independence of the sets of vectors imply $x_1 \wedge  \cdots  \wedge  x_i \wedge z \neq 0$, $y_1 \wedge  \cdots  \wedge  y_i \wedge z \neq 0$.
If $NO(x_1 \wedge  \cdots  \wedge  x_i) = NO(y_1 \wedge  \cdots  \wedge  y_i)$ then 
$x_1 \wedge  \cdots  \wedge  x_i = \beta  x_1 \wedge  \cdots  \wedge  x_i$ for $\beta \in \mathbb R$.

Then
\begin{align*}
    x_1 \wedge  \cdots  \wedge  x_i \wedge	z &=y_1 \wedge  \cdots  \wedge  y_i \wedge	z  \\
    (x_1 \wedge  \cdots  \wedge  x_i -y_1 \wedge  \cdots  \wedge  y_i) \wedge	z &=0
    \\
    (1-\beta ) x_1 \wedge  \cdots  \wedge  x_i \wedge	z &=0
\end{align*}
Finally $\beta=1$ and 
$x_1 \wedge  \cdots  \wedge  x_i = y_1 \wedge  \cdots  \wedge  y_i$.
\end{proof}


\begin{lemma*} \label{l3}
Consider two complete flags $\{V\}$ and $\{W\}$ in $\R^m$ and $i\leq m$ such that
  $V_{j} = W_{j}$ for $j>i$.
 	The rotor $R_i$ between the hyperplanes 	$V_i \oplus V_{i+1}^{\perp}$ and $W_i \oplus W_{i+1}^{\perp}$  constructed by the formula \eqref{rot} maps $V_i$  to $W_i$.  
\end{lemma*}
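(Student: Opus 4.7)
The plan is to identify the rotor \(R_i\) explicitly via the unit normals of the two hyperplanes. The hyperplane \(V_i \oplus V_{i+1}^\perp\) admits a unique unit normal \(n_V\) (up to sign), which lies in \(V_{i+1}\cap V_i^\perp\); similarly \(n_W \in W_{i+1} \cap W_i^\perp\). By formula~\eqref{rot} I set \(R_i = \widehat{1 + n_W n_V}\). The hypothesis \(V_j = W_j\) for \(j > i\) gives in particular \(V_{i+1}=W_{i+1}\), so both \(n_V\) and \(n_W\) lie in the common \((i+1)\)-dimensional subspace \(E := V_{i+1} = W_{i+1}\).

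First I would invoke Lemma~\ref{rot2} with \(x = n_V\), \(y = n_W\): it says that \(R_i\) is the rotation in the plane \(P := \langle n_V, n_W\rangle \subseteq E\) sending \(n_V\) to \(n_W\), and acts as the identity on \(P^\perp\). Let \(U\) denote the orthogonal complement of \(P\) inside \(E\). Then \(E = P \oplus U\), the complement \(V_{i+1}^\perp\) is contained in \(P^\perp\), and \(R_i\) fixes both \(U\) and \(V_{i+1}^\perp\) pointwise.

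The core step is then a short geometric check. For an arbitrary \(v \in V_i \subseteq E\), decompose \(v = v_P + v_U\) with \(v_P \in P\) and \(v_U \in U\). Since \(v \perp n_V\) (as \(V_i \perp n_V\) within \(V_{i+1}\)) and \(v_U \perp n_V\) (because \(n_V \in P\)), one gets \(v_P \perp n_V\), i.e. \(v_P\) lies on the unique line in \(P\) orthogonal to \(n_V\). The planar rotation \(R_i\) carries that line to the line in \(P\) orthogonal to \(n_W\), so \(R_i v_P \tilde R_i \perp n_W\); combined with \(R_i v_U \tilde R_i = v_U \in U \perp n_W\), this shows that \(R_i v \tilde R_i\) lies in \(E\) and is perpendicular to \(n_W\), i.e. \(R_i v \tilde R_i \in W_i\). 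Since \(R_i\) is an isometry, dimension forces the equality \(R_i V_i \tilde R_i = W_i\).

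The only delicate point I foresee is the exceptional case \(n_W = -n_V\), where formula~\eqref{rot} is undefined; this is the familiar antipodal obstruction of the rotor construction and is avoided by a sign choice for the unit normal (or by inserting an auxiliary intermediate rotation). Otherwise the argument is essentially a kinematic splitting: \(R_i\) is trivial on the common part \(U \oplus V_{i+1}^\perp\) and swings the direction inside \(V_{i+1}\) perpendicular to \(V_i\) exactly onto the direction perpendicular to \(W_i\).
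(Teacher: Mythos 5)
Your argument is correct and follows essentially the same route as the paper's proof: the key point in both is that the normals $n_V,n_W$ lie in $V_{i+1}=W_{i+1}$, so the rotation plane of $R_i$ sits inside that common subspace and $R_i$ acts as the identity on $V_{i+1}^\perp=W_{i+1}^\perp$, whence it carries $V_i$ onto $W_i$. You merely spell out details the paper leaves implicit (the explicit decomposition $v=v_P+v_U$ and the degenerate case $n_W=-n_V$, which the paper does not mention but which indeed only occurs when the two hyperplanes already coincide).
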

\begin{proof}
The property $V_i \subset V_{i+1}$ implies that $V_{i+1}^{\perp} \subset V_{i}^{\perp}$ and thus $V_i \oplus V_{i+1}^{\perp}$ is a hyperplane equipped with an  orthogonal decomposition. Recall that $V_{i+1}=W_{i+1}$, and thus $V^\perp_{i+1}=W^\perp_{i+1}$.
Any rotation preserves an orthogonal decomposition and thus $R_i$
acts as the identity 
on $V^\perp_{i+1}=W^\perp_{i+1}$
, so it maps $V_i$ to $W_i$. 
\end{proof}
We use all these Lemmas to
provide a constructive proof of the following theorem.

\begin{thm}\label{thm}
Let $(x_1,\dots, x_{m})$ and $(y_1,\dots, y_{m})$ be a pair of bases  of $\R^m$ such that 
\begin{enumerate}
\item $x_i \cdot x_j = y_i \cdot y_j$ for all $i,j=1 ,\dots, m $, and 
\item $x_1 \wedge  \cdots  \wedge  x_m =y_1 \wedge  \cdots  \wedge  y_m$.
\end{enumerate}
Then we can construct a rotor $R$ such that  $Rx_i  \tilde R = y_i$ for all $j,i= 1,\dots m$.
\end{thm}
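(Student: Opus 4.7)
The plan is to carry out the iterative algorithm sketched immediately before the statement, producing $R$ as an ordered product $R = R_{1}R_{2}\cdots R_{m-1}$ of elementary rotors, each supplied by Lemma \ref{rot2}, and then invoking Lemma \ref{l2} to upgrade the resulting flag match into the desired pointwise equalities $Rx_{i}\tilde{R} = y_{i}$. Setting $V_{i} = NO(x_{1}\wedge\cdots\wedge x_{i})$ and $W_{i} = NO(y_{1}\wedge\cdots\wedge y_{i})$, one has $V_{m} = W_{m} = \mathbb{R}^{m}$ automatically, and I would write $\bar x_{i}^{(j)}$ and $V_{i}^{(j)}$ for the basis and flag obtained from $x_{i}$ after conjugating by the partial product $R_{j}R_{j+1}\cdots R_{m-1}$.

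I would build the factors in the order $j = m-1, m-2, \ldots, 1$, maintaining the invariant that $V_{k}^{(j+1)} = W_{k}$ for every $k > j$. At step $j$, I form the two hyperplanes $V_{j}^{(j+1)} \oplus W_{j+1}^{\perp}$ and $W_{j} \oplus W_{j+1}^{\perp}$, pick unit normals $n_{V}, n_{W} \in W_{j+1}$, and set $R_{j} := \widehat{1 + n_{W}n_{V}}$ via formula \eqref{rot}. By Lemma \ref{l3}, $R_{j}$ carries $V_{j}^{(j+1)}$ onto $W_{j}$; and since its rotation plane $n_{V}\wedge n_{W}$ lies inside $W_{j+1}$, it acts as identity on $W_{j+1}^{\perp}$ and preserves $W_{j+1}$, hence preserves every $W_{k}$ with $k \geq j+1$. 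The invariant therefore extends to level $j$, and after the last step $V_{k}^{(1)} = W_{k}$ for all $k$.

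With $\bar x_{i} := Rx_{i}\tilde{R}$ I would verify the two hypotheses of Lemma \ref{l2}: the Gram matrices agree because rotor conjugation is an isometry, and the top wedge $\bar x_{1}\wedge\cdots\wedge\bar x_{m}$ matches $y_{1}\wedge\cdots\wedge y_{m}$ because rotors fix the pseudoscalar. Descending from $i = m$ to $i = 1$, both partial wedges $\bar x_{1}\wedge\cdots\wedge\bar x_{i}$ and $y_{1}\wedge\cdots\wedge y_{i}$ represent the common subspace $W_{i}$ and hence differ only by a scalar, which is pinned to $+1$ by Lemma \ref{l2b} applied with the already-matched $(i+1)$-wedge. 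Lemma \ref{l2} then delivers $\bar x_{i} = y_{i}$ for every $i$. The main technical nuisance I anticipate is precisely this last sign-bookkeeping: the flag equality and the Gram condition alone pin each wedge down only up to $\pm 1$, so one must propagate the orientation encoded in the top pseudoscalar down the flag, either by a careful choice of signs for the normals $n_{V}, n_{W}$ in the rotor construction or by an explicit verification via Lemma \ref{l2b}.
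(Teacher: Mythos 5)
Your proposal is correct and follows essentially the same route as the paper's own proof: the inductive flag-matching by elementary rotors supplied by Lemma \ref{rot2} and Lemma \ref{l3}, followed by Lemmas \ref{l2} and \ref{l2b} to convert the flag equality into the pointwise equalities $Rx_i\tilde R = y_i$. If anything, you are more explicit than the paper about the final orientation bookkeeping (propagating the sign down the flag via Lemma \ref{l2b}), which the paper leaves to a one-line citation.
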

\begin{proof}
 Let $\{ V \}$ and $\{ W \}$ be a pair of the corresponding complete flags
$V_i=NO(x_1 \wedge \cdots \wedge x_i)$ and $W_i=NO( y_1 \wedge \cdots \wedge y_i)$. We construct a rotor  $R=R_1 \cdots R_m$
mapping the complete flag $\{ V \}$ to the complete flag $\{ W \}$ so that $V_i=W_i$ for all $i=1,\dots m$. The result on bases $x_i, y_i$ then follows by Lemmas \ref{l2} and \ref{l2b}.

We define $R_m$ as the identity and proceed inductively. 
It follows from Lemma \ref{rot2} that there is a rotation $R_{m-1}$  between the hyperplanes $V_{m-1} \oplus V_m^{\perp} \cong V_{m-1}$ and $W_{m-1} \oplus W_m^{\perp} \cong W_{m-1}$ which maps the complete flag $\{ V \}$ to the complete flag $\{ R V  \tilde R \}$ in such a way that $W_{m-1}=R V_{m-1} \tilde R$, where $R= R_{m-1} R_m$.

As the induction step, we consider the rotor $R=R_{j} \cdots R_m$ such that 
$RV_i  \tilde R = W_i$ for all indices $i \geq j$. According to Lemma \ref{l3}, the rotation $R_{j-1}$ between the hyperplanes $(R V_{j-1}  \tilde R )\oplus (R V_j  \tilde R )^{\perp}$ and $W_{j-1} \oplus W_j^{\perp} $  maps the complete flag $\{ R V   \tilde R\}$ to the complete flag $\{ R_j R V \tilde R \tilde{R}_j \}$ in such a way that $W_{i}= R_j R V_{i} \tilde R \tilde{R}_j$ for all $ i \geq j-1 $.
 
After $m$ steps, the rotor $R=R_1 \cdots R_m$
maps the complete flag $\{ V \}$  to the complete flag $\{ W \}$ in such a way that $V_i=W_i$ for all $i=1,\dots, m$
and so $Rx_i  \tilde R = y_i$ for all $j,i= 1,\dots, m$ because of Lemma \ref{l2}.
\end{proof} 

The explicit construction in the proof of Theorem \ref{thm} gives us  the following algorithm.

\begin{algorithm}{Calculate the rotor $R= R_1\dots R_m$}
\begin{algorithmic} \label{algor}
\Require  $x_i \cdot x_j = y_i \cdot y_j$ and $x_1 \wedge  \cdots  \wedge  x_m=y_1 \wedge  \cdots  \wedge  y_m$
\Ensure $y_i = R x_i  \tilde R$
\For{$m > i > 0$}
\State $V_i \leftarrow x_1 \wedge \cdots \wedge x_i$
\State $W_i \leftarrow y_1 \wedge \cdots \wedge y_i$
\EndFor
\State $R \leftarrow Id$
\For{$m > i > 0$}
\State $V_i \leftarrow R V_i  \tilde R$
\State $H_V \leftarrow V_i \wedge W_{i+1}^*$
\State $H_W \leftarrow W_i \wedge W_{i+1}^*$
\State $R_i \leftarrow \widehat{1+\hat{H}_V^* \hat{H}_W^*}$
\State $R \leftarrow R_iR$
\EndFor
\end{algorithmic}
\end{algorithm}

\section{Nilpotent control problems in GA approach} \label{GAapproach}

We use the symmetries of $SO(3)$ to define an equivalence relation on the set of geodesics passing through the origin, see Proposition \ref{akce}. We find a convenient representative of any equivalence class and describe the moduli space in the language of GA.

\subsection{Geodesics of $(3,6)$}

Since the vector product $x \times h$ coincides with minus the dual of wedge product
$x \wedge h$ according to \eqref{we-cr},  the horizontal system  \eqref{rr1} can be written in the form 
\begin{align}
\begin{split} \label{rr1GA}
\dot x &= h, \\
\dot z &= -{1 \over 2}x \wedge h 
\end{split}
\end{align} 
 where $x \in \wedge^1 \mathbb R^3$ represents a line and $z \in \wedge^2 \mathbb R^3$ represents a plane in $\mathbb R^3$. In this way we see the geodesics as curves in the geometric algebra $\mathbb G_3$.

\begin{prop*} \label{geodesics}
Each arc--length parameterized sub--Riemannian geodesic satisfying the initial condition $x_i(0)=0,z_i(0)=0$, $i=1,2,3$ is equivalent to a curve in $M\cong \wedge^1 \mathbb R^3 \oplus \wedge^2 \mathbb R^3 \subset \mathbb G_3 $ and, up to the action of a suitable  $R \in SO(3),$ it takes form
\begin{align} \begin{split} \label{Gx}
 q(t)&=x(t)+z(t)=\frac{D}{K}(1-\cos(Kt))e_1+\frac{D}{K}\sin(Kt)e_2+C_3te_3 \\
& - \frac{D^2}{2K^2}(Kt-\sin(Kt)) e_1 \wedge e_2  \\ &
 -\frac{C_3D}{2K^2} (Kt-2\sin(Kt)+Kt\cos(Kt)) e_3 \wedge e_1 \\ &+\frac{C_3D}{2K^2}(2-Kt\sin(Kt)-2\cos(Kt)) e_2 \wedge e_3,
\end{split}
 \end{align}
where $K>0$ and $D,C_3$  satisfy the level set equation
$
D^2+C_3^2= 1.
$

\end{prop*}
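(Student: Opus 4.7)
The proof will exploit the $SO(3)$-symmetry of Proposition~\ref{akce} to reduce the general solution \eqref{h36} to the two-parameter family displayed in \eqref{Gx}, and then integrate the reduced system \eqref{rr1GA} explicitly. The crucial observation is that the $SO(3)$-action $(x,z)\mapsto(Rx,Rz)$ lifts to the fibre data as $\Omega\mapsto R\Omega R^{-1}$, which is precisely the $SO(3)$-rotation of the skew-symmetric ``$K$-vector'' $(K_3,-K_2,K_1)^T$ spanning $\ker\Omega$. The only $SO(3)$-invariant of this vector is its magnitude $K$, so we may choose $R$ rotating $\ker\Omega$ onto the $e_3$-axis; in the new coordinates $\Omega$ acts only in the $e_1 e_2$-plane, with the eigenspace-adapted basis $(v_1,v_2,v_3)$ identified with $(e_1,e_2,e_3)$ up to orientation.

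Next I invoke the residual $SO(2)$-stabilizer of this canonical $\Omega$ — rotations about $e_3$ — which commutes with $\Omega$ and acts on the initial amplitudes $(C_1,C_2)$ while fixing $C_3$. Using this $SO(2)$-freedom to absorb the phase in $(C_1,C_2)$ brings $h(t)$ to the canonical form
\[ h(t)=D\sin(Kt)\,e_1+D\cos(Kt)\,e_2+C_3\,e_3, \qquad D:=\sqrt{C_1^2+C_2^2}\geq 0, \]
and the level-set condition $C_1^2+C_2^2+C_3^2=1$ reads $D^2+C_3^2=1$. The vector part of \eqref{Gx} follows at once from $x(t)=\int_0^t h\,ds$ with $x(0)=0$. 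For the bivector part I expand $x\wedge h$ in the basis $\{e_1\wedge e_2,\,e_1\wedge e_3,\,e_2\wedge e_3\}$ — each coefficient is a trigonometric polynomial in $Kt$ — and integrate the equation $\dot z=\pm\tfrac12\,x\wedge h$ from \eqref{rr1GA} using the standard antiderivatives $\int(1-\cos Ks)\,ds$, $\int s\sin(Ks)\,ds$, $\int s\cos(Ks)\,ds$. Direct computation then yields the three bivector coefficients of \eqref{Gx}.

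The main obstacle I expect is sign and orientation bookkeeping: matching the precise signs in \eqref{Gx} may require one final discrete element of $SO(3)$ (for instance a $\pi$-rotation about a coordinate axis), which has the harmless effect of flipping $C_3$, and this is absorbed by the free sign of the parameter $C_3$. Conceptually the reduction is forced by dimension counting — the three parameters of $SO(3)$ correspond exactly to the two angles parametrizing the direction of the $K$-vector together with the single phase of $(C_1,C_2)$ — so the existence of the canonical representative \eqref{Gx} is guaranteed; only the explicit identifications require care.
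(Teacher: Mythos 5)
Your proposal is correct and follows essentially the same route as the paper: both use the $SO(3)$--symmetry to bring $h(t)$ to the canonical form $D\sin(Kt)e_1+D\cos(Kt)e_2+C_3e_3$ with $D=\sqrt{C_1^2+C_2^2}$ and then obtain \eqref{Gx} by direct integration of \eqref{rr1GA} with the initial condition. The only presentational difference is that you factor the rotation as an alignment of $\ker\Omega$ with $e_3$ followed by a residual $SO(2)$ phase absorption, whereas the paper first absorbs the phase into the adapted orthonormal frame $\bar v_1,\bar v_2,v_3$ and then applies a single $R$ carrying that frame to the standard basis --- the composite rotation is the same.
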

\begin{proof}
The solution to the vertical system \eqref{h36} can be rewritten as 	
\begin{align}
\begin{split}  
\label{hh1}
h (t)
= D\sin(Kt) \bar v_1
+ D\cos(Kt) \bar v_2
+ C_3  v_3
\end{split},
\end{align}
where we denote $D = \sqrt{C_1^2+C_2^2},$ and
the orthonormal vectors $\bar{v}_1,\bar{v}_2$ 
are obtained by the rotation of orthonormal vectors $v_1,v_2$
as
 $$\bar v_1=\frac{1}{\sqrt{C_1^2+C_2^2}}(-C_1v_1+C_2v_2),\;
 \bar v_2=\frac{1}{\sqrt{C_1^2+C_2^2}}(C_2v_1+C_1v_2).$$ 
Thus the vectors 
$\bar v_1, \bar v_2, v_3$ are orthonormal  with respect to the Euclidean metric on $\mathbb R^3$.
So, there is an orthogonal matrix $R \in SO(3)$ that aligns vectors $\bar v_1, \bar v_2,  v_3$ with the standard basis of $\R^3$. Thus we get
$$ \bar v_1 = R
e_1
,\ \ \  \bar v_2 = Re_2,  \ \ v_3 = Re_3,
$$
where $e_1,e_2$ and $e_3$ are the elements of the standard Euclidean basis of $\mathbb R^3$. 
According to \eqref{action}, the rotor $R$ defines a representative of a geodesic class 
$(R^Tx(t),R^Tz(t))$ which is a solution to \eqref{rr1GA}
for $h (t)
= D\sin(Kt) e_1
+ D\cos(Kt) e_2
+ C_3  e_3$.
The solution \eqref{Gx} then follows by a direct integration when the initial condition is applied. Equation for the level set follows from the definition of $D$.
\end{proof}

The action of $SO(3)$ on $M \cong \mathbb R^6$ given by equation  \eqref{action} 
defines a moduli space $M/SO(3)$. We see $M$ as a subset of $\mathbb G_3$ and the group $SO(3)$ is represented by rotors instead of matrices, which act on $M$ by conjugation.
The action preserves the vector and bivector parts, inner product, norm and dualization with respect to $*$.
We can see the elements of $M$ as the pairs consisting of  lines and planes. The natural invariants are the norms of lines' directional vectors, norms of the planes' normal vectors and angles between these pairs of vectors. Square norm of the normal vector of the plane $z^* \cdot z^*$ is $-z \cdot z$.  Scalar product between the directional vector of the line $x$ and the normal vector of the plane $z$  can be rewritten as $(x \wedge z )^*$ because $(x \cdot z^*)^* =  x \wedge z  $ and $x \cdot z^* = (x \wedge z )^*$.  Altogether, we consider three invariants  
\begin{itemize}
\item the square norm of the vector $x$, i.e. $x \cdot x$,
\item the square  norm of the bivector $z$, i.e. $z \cdot z$,
\item the element $(x \wedge z )^*$ ,
\end{itemize} 
where $\cdot$ coincides with the inner product on $\mathbb G_3$. 
In particular, these invarians form a coordinate system on the moduli space $M/SO(3)$.

\begin{prop*} \label{prop4}\label{invp}
Each geodesic starting at the origin defines a curve in the moduli space $M/SO(3)$, 
which is determined by the invariants in the following way 
\begin{align}
\begin{split}
x \cdot x=& -\frac{2D^2}{K^2}(\cos(Kt) - 1) + C_3^2 t^2\\
z \cdot z =&  - \frac{D^{2}}{4 K^4} 
((4C_3^2K^2 - 4C_3^2 - D^2)\cos(Kt)^2 \\ &+ 2KC_3^2(2t(K - 1)\sin(Kt) + t^2K - 4)\cos(Kt) 
\\ &- 2Kt(4C_3^2 + D^2)\sin(Kt) + t^2(2C_3^2 + D^2)K^2 + D^2 + 8C3^2)
\\
(x \wedge z )^*=& \: \frac{D^{2}{C_3}}{2 K^3}( 
(-2K + 2) \cos(Kt)^2 + (2K + 2)\cos(Kt) + K^2t^2 + Kt\sin(Kt) - 4)
\end{split}
\label{inv} 
\end{align}
\end{prop*}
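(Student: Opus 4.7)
The plan is to exploit the fact that each of the three quantities $x\cdot x$, $z\cdot z$, and $(x\wedge z)^*$ is $SO(3)$--invariant. Both the inner product and multiplication by the pseudoscalar commute with conjugation by any rotor representing a rotation in $SO(3)$, so these quantities descend to well-defined functions on the moduli space $M/SO(3)$. Consequently they may be evaluated on the canonical representative \eqref{Gx} furnished by Proposition \ref{geodesics}, and the proof reduces to substitution and trigonometric simplification.

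First I would evaluate $x\cdot x$. Because $e_1,e_2,e_3$ is orthonormal, this is the sum of squares of the three scalar components of $x(t)$ in \eqref{Gx}. Using the identity $(1-\cos Kt)^2+\sin^2 Kt=2(1-\cos Kt)$ produces the stated formula at once. Next I would compute $z\cdot z$: in $\mathbb G_3$ the basis bivectors satisfy $(e_i\wedge e_j)\cdot(e_i\wedge e_j)=-1$ and distinct basis bivectors are orthogonal, so $z\cdot z$ equals the negative of the sum of squares of the three bivector coefficients appearing in \eqref{Gx}. Expanding, applying $\sin^2+\cos^2=1$ and collecting monomials in $t$, $\sin(Kt)$, $\cos(Kt)$ and their products yields the second equation in \eqref{inv}.

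For the third invariant, since $x$ is a vector and $z$ a bivector in $\mathbb R^3$, the product $x\wedge z$ lies in the one-dimensional space $\wedge^3\mathbb R^3=\langle I\rangle$. Writing $z=z_{12}\,e_1\wedge e_2+z_{31}\,e_3\wedge e_1+z_{23}\,e_2\wedge e_3$, one has $x\wedge z=(x_1z_{23}+x_2z_{31}+x_3z_{12})\,I$, which dualizes via $A^*=AI$ to a scalar (up to the sign $I^2=-1$). Substituting the explicit components from \eqref{Gx} and simplifying yields the third line of \eqref{inv}.

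The only real obstacle is bookkeeping: the bivector components in \eqref{Gx} are quadratic in trigonometric polynomials of $Kt$, so squaring and pairwise multiplying them produces a long list of terms that must be gathered into the compact form stated in the proposition. I do not expect any conceptual difficulty beyond the invariance observation above, and in practice a symbolic computation environment is a convenient way to carry out the final collection of terms; everything can nevertheless be done by hand using only $\sin^2+\cos^2=1$ and elementary product-to-sum identities.
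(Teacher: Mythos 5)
Your proposal is correct and matches the paper's approach: the paper's proof is simply ``Follows directly from \eqref{Gx}'', i.e.\ substitution of the canonical representative into the three invariant expressions, which is exactly what you describe (with the invariance observation justifying why the representative suffices). Your sign conventions for the bivector inner product and the dualization are consistent with the paper's, so the remaining work is indeed only the trigonometric bookkeeping you identify.
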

\begin{proof}
Follows directly from \eqref{Gx}.
\end{proof}

\subsection{Geodesics of $(4,7)$}
The base system \eqref{xl} can be seen as a system in geometric algebra $\mathbb G_4$ 
\begin{align}
\begin{split} \label{xlGA}   
	 \dot x + \dot \ell &= h_0+\bar{h},  \\
	 \dot y  &
	  =-x  \wedge \bar{h} - \ell \wedge  h_0,
	  \end{split}
\end{align}
where we assume that $x$ and $h_0$ are collinear with $e_1$ and $\ell,\bar{h}$ in the subspace generated by $e_2,e_3,e_4.$ The form of the second equation implies that $y$ is given by minus the wedge product of $e_1$ and a vector from this subspace. Hence the solution $y(t)$ can be viewed as a curve of planes in $\mathbb{G}_4.$


\begin{prop*} \label{geo}
Each arc--length parameterized sub--Riemannian geodesic 
satisfying the initial condition $x(0)=0,\ell_i(0)=0,y_i(0)=0,i=1,2,3$
is equivalent to a curve in $N\cong \wedge^1 \mathbb R^4 \oplus \wedge^2 \mathbb R^4 \subset \mathbb G_4 $ and,  up to the action of suitable  $R \in SO(3),$ it
takes form
 \begin{align}
 \begin{split} 
\label{newgeo}
 q(t)&=x(t)+\ell(t)+y(t)=(C_1\cos(Kt)+C_2\sin(Kt)-C_1) e_1 \\
 & +(C_1\sin(Kt)-C_2\cos(Kt)+C_2)
 e_2 + C t e_3
 \\ &+\frac{1}{2}(C_1^2+C_2^2)(tK-\sin(Kt)) e_1 \wedge e_2 \\ &+
 \frac{C}{2K}((2C_1-C_2Kt)\sin(Kt)-(C_1Kt+2C_2 )\cos(Kt) +2C_2-tC_1K)  e_1 \wedge e_3
 ,
 \end{split}
 \end{align}
where $K>0$ and the constants $C_1, C_2,C$  satisfy the level condition
$
K^2(C_1^2+ C_2^2) +C^2 =1.
$
\end{prop*}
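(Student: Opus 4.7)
The plan is to follow the strategy of Proposition~\ref{geodesics} for the $(3,6)$ case, adapted to the distinct structure of the vertical solution~\eqref{h1}. The crucial observation is that in~\eqref{h1} the vector $\bar h(t)$ always lies in the two--dimensional subspace $\mathrm{span}(r_1, r_2)$ of $\R^3$, independently of $t$, because only the coefficient of $r_1$ is time--dependent. Complete $(r_1, r_2)$ to a positively oriented orthonormal basis $(r_1, r_2, r_3)$ of $\R^3$ by setting $r_3 = r_1 \times r_2$, and take the unique $R \in SO(3)$ such that $R r_i = e_{i+1}$ for $i = 1,2,3$. By Proposition~\ref{akce}, the curve $(x(t), R^T \ell(t), R^T y(t))$ is a sub--Riemannian geodesic equivalent to the original one, and in this representative $\bar h(t) \in \mathrm{span}(e_2, e_3)$, while $h_0(t)\, e_1$ remains unchanged since $SO(3)$ acts trivially on the $e_1$ direction.

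Next I would integrate the base system~\eqref{xl} directly with the rotated $h$ and the initial condition~\eqref{ic2}. The scalar equations $\dot x = h_0$ and $\dot \ell = \bar h$ yield the vector part of~\eqref{newgeo} by elementary antidifferentiation of trigonometric expressions, with the integration constants fixed by~\eqref{ic2}. For the bivector part, substituting $x(t)\,e_1$, $h_0(t)\,e_1$, $\ell(t)$ and $\bar h(t)$ into the GA form~\eqref{xlGA} of $\dot y$ shows that only the components along $e_1 \wedge e_2$ and $e_1 \wedge e_3$ contribute, because $x, h_0$ are multiples of $e_1$ while $\ell, \bar h$ lie in $\mathrm{span}(e_2, e_3)$; all remaining bivector components of $y(t)$ vanish identically. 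This is precisely why~\eqref{newgeo} involves only three of the four basis vectors of $\R^4$, even though the ambient space is $\wedge^1 \R^4 \oplus \wedge^2 \R^4$. The level set condition $\|h(t)\|^2 = 1$ then unpacks as $h_0^2 + \|\bar h\|^2 = K^2(C_1^2 + C_2^2) + C^2 = 1$.

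The main obstacle is computational rather than conceptual: the two bivector coefficients in~\eqref{newgeo} require careful integration of products of trigonometric and polynomial factors, and the $e_1 \wedge e_3$ term in particular picks up extra contributions from the linear $Ct\,e_3$ component of $\ell(t)$ interacting with the oscillatory part of $h_0$. A clean route is to introduce abbreviations $u(t) = C_1 \cos(Kt) + C_2 \sin(Kt)$ and $v(t) = C_2 \cos(Kt) - C_1 \sin(Kt)$ and to exploit the identities $u^2 + v^2 = C_1^2 + C_2^2$ and $C_1 u + C_2 v = (C_1^2 + C_2^2)\cos(Kt)$ to reduce the $e_1 \wedge e_2$ integrand to $\tfrac{K}{2}(C_1^2 + C_2^2)(1 - \cos(Kt))$, whose antiderivative matches the formula stated in~\eqref{newgeo}.
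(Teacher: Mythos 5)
Your proposal matches the paper's proof: both use the observation that $\bar h(t)$ stays in $\mathrm{span}(r_1,r_2)$, invoke the $SO(3)$ symmetry \eqref{action2} to rotate this plane onto fixed standard basis directions (leaving the $x$, $h_0$ components untouched), and then obtain \eqref{newgeo} by direct integration of the base system with the initial condition, the level condition coming from $h_0^2+\|\bar h\|^2=1$. The only difference is that you spell out the trigonometric bookkeeping (the $u,v$ identities) that the paper compresses into ``by direct integration''; your computations check out.
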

\begin{proof} 
According to the vertical system \eqref{h1}, the vector $\bar{h}(t)$ lies in the subspace generated by the vectors $r_1,r_2$ for any $t$. Since the vectors $r_1$ and $r_2$ are orthonormal, there is an orthogonal matrix $R \in SO(3)$ that aligns these vectors  with the second and third vector of the standard basis of $\R^3$, i.e 
$$ r_1 = R e_2,\ \ \ r_2 = Re_3. $$ Due to the symmetry of this system, see \eqref{action2}, this rotor defines a representative of the geodesic class $(x(t),R^T \ell (t),R^T y (t) )$ which is the solution to the horizontal system \eqref{xl} for
\begin{align*}
h_0&=K(C_2\cos(Kt)-C_1\sin(Kt)) \\ 
\bar h(t) &=K(C_2\sin(Kt)+C_1\cos(Kt))
e_1
+Ce_2
\end{align*}
or, equivalently, a curve in $\mathbb{R}^4\oplus\Lambda^2\mathbb{R}^4\in\mathbb{G}_4$ given by the solution of \eqref{xlGA}. By direct integration of this equation and by imposing the initial conditions, we get  the formula \eqref{newgeo} for the solution.
\end{proof}

%

The action of $SO(3)$ on $N \cong \mathbb R^7$ given by  \eqref{action2}, 
defines a moduli space $N/SO(3)$. We see $N$ as a subset of $\mathbb G_4$ and the group $SO(3)$ is represented by rotors instead of matrices, which act on $N$ by the conjugation.
The action preserves the vector and bivector part, the split $x+ \ell$, inner product, norm and dualization with respect to $*$.
The orbits of this action are determined by natural invariants. For the same reason as in the case of $(3,6)$ and due to the invariant split, we have three invariants as follows
\begin{itemize}
    \item the value of the coordinate $x$,
	\item the square of the norm of the vector $\ell$, i.e. $\ell\cdot\ell$,
	\item the square of the norm of the bivector $y$, i.e. $y\cdot y$.
\end{itemize}
We need one more invariant for the dimensional reasons but the element $(\ell \wedge y )^*$ is not scalar but vector. On the other hand,  $(\ell \cdot y )$ is a multiple of the vector $e_1$, so the value of $(\ell \cdot y)e_1$ is a scalar. As the last invariant we consider 
\begin{itemize}
    \item the value of $(\ell \cdot y)e_1$.
\end{itemize}
These form the coordinate system on the moduli space $N/SO(3)$.

\begin{prop*} \label{prop5}
Each geodesic starting at the origin defines a curve in the moduli space $N/SO(3),$ 
which is determined by the invariants in the following way

\begin{align}
\begin{split}
x &= C_1 (\cos Kt - 1) + C_2 \sin Kt, \\
\ell \cdot \ell &= (C_1 \sin Kt + C_2 (1-\cos Kt ))^2+ (C t)^2,  \\
(\ell \cdot y)e_1 &=\frac12\Big( (C_1^2+C_2^2)
\big(C_1 \sin Kt + C_2 (1-\cos Kt ) \big)(Kt - \cos Kt) \\
&+\frac{C^2}{K}t  \big(C_1 (2 \sin Kt  - Kt \cos Kt - Kt   ) \\ &+ 
C_2 (2-2 \cos Kt  - Kt \sin Kt  ) \big)\Big),\\
y \cdot y &=\frac14 \Big((C_1^2+C_2^2)^2(Kt - \cos Kt)^2+ \frac{C^2}{K^2} \big( C_1 (2 \sin Kt  - Kt \cos Kt - Kt   ) \\ &+ C_2 (2-2 \cos Kt  - Kt \sin Kt  ) \big)^2\Big).
\label{inv2}
\end{split}
\end{align}
\end{prop*}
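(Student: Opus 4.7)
The plan is to proceed exactly as in the proof of Proposition \ref{prop4}: invoke the explicit representative \eqref{newgeo} from Proposition \ref{geo} and read off each invariant by direct computation. Because the SO(3)-reduction has already been carried out in Proposition \ref{geo}, for the chosen representative the vector $\ell$ sits in $\langle e_2, e_3\rangle$ and the bivector $y$ sits in $\langle e_1\wedge e_2, e_1\wedge e_3\rangle$, so no cross-terms arise and the four invariants decouple cleanly.

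First, from \eqref{newgeo} I read off the scalar coordinate $x(t)=C_1(\cos Kt-1)+C_2\sin Kt$, which is the first invariant. Next, the vector $\ell(t)=(C_1\sin Kt-C_2\cos Kt+C_2)\,e_2+Ct\,e_3$ has orthogonal components, so $\ell\cdot\ell$ is just the sum of the squares of the two scalar coefficients, and a trivial rearrangement produces $(C_1\sin Kt+C_2(1-\cos Kt))^2+(Ct)^2$.

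The two bivector-related invariants require the contraction rules from the subsection on the geometric product. Writing $y(t)=a(t)\,e_1\wedge e_2+b(t)\,e_1\wedge e_3$ with $a,b$ the scalar coefficients given in \eqref{newgeo}, the orthogonality of $e_1\wedge e_2$ and $e_1\wedge e_3$ together with the norm identity $(e_1\wedge e_j)\cdot(e_1\wedge e_j)=1$ gives $y\cdot y=a(t)^2+b(t)^2$; substituting the explicit $a$, $b$ and expanding yields the claimed formula. For the mixed invariant, the left contractions $e_2\cdot(e_1\wedge e_2)=e_1$ and $e_3\cdot(e_1\wedge e_3)=e_1$, combined with the vanishing of the other two relevant contractions, force $\ell\cdot y$ to be a multiple of $e_1$; multiplying by $e_1$ on the right produces the scalar $(C_1\sin Kt-C_2\cos Kt+C_2)\,a(t)+Ct\,b(t)$, which after collecting terms matches the stated formula for $(\ell\cdot y)e_1$.

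There is no conceptual obstacle: the proof is a verification, and the only real difficulty is bookkeeping the trigonometric algebra in the two bivector-valued invariants. This mirrors the one-line justification of Proposition \ref{prop4}; the extra invariant $(\ell\cdot y)e_1$ appears simply because the invariant splitting $\N=\langle Y_0\rangle\oplus\langle Y_1,Y_2,Y_3\rangle$ increases the dimension of the moduli space $N/SO(3)$ by one relative to the $(3,6)$ case, and the quantity $\ell\cdot y$ is the unique natural scalar built from $\ell$ and $y$ that is linear in each and uses the $e_1$-direction singled out by that splitting.
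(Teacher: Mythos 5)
Your proposal follows exactly the route of the paper, whose entire proof is ``Follows directly from \eqref{newgeo}'': substitute the representative of Proposition \ref{geo} and contract, using that $\ell$ lives in $\langle e_2,e_3\rangle$ and $y$ in $\langle e_1\wedge e_2,e_1\wedge e_3\rangle$ so the invariants decouple. One caveat on bookkeeping: under the paper's inner product a unit bivector satisfies $(e_1\wedge e_j)\cdot(e_1\wedge e_j)=-1$ rather than $+1$ (compare the leading minus in Proposition \ref{prop4} and the value $y\cdot y=-9$ computed in the $(4,7)$ example), so your positive $y\cdot y=a^2+b^2$ agrees with the displayed \eqref{inv2} only because that formula itself has evidently dropped a sign, and likewise the factor $(Kt-\sin Kt)$ coming from \eqref{newgeo} appears in \eqref{inv2} as $(Kt-\cos Kt)$ --- your computation is right, but ``yields the claimed formula'' should be ``yields the claimed formula up to these two typos in the statement.''
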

\begin{proof}
Follows directly from \eqref{newgeo}.
\end{proof}

\section{Examples}

In the sequel, we present two examples of controls based on the symmetries in geometric algebra approach. We have the following scheme based on Algorithm
\ref{algor}.  
\begin{enumerate}
\item For the target point $q_t$ compute the invariants of the chosen particular control  system \eqref{control}.
\item Solve the system of non--linear equations  \eqref{inv} or \eqref{inv2} in the moduli space.
\item Find the family of curves \eqref{Gx} or \eqref{newgeo} going from the origin to the same point $q_o$ that belongs to the same $SO(3)$ orbit of $q_t$.
\item Find $R \in SO(3)$, such that $R(q_o) =q_t$. 
\item Apply $R$ on the set of curves \eqref{Gx} or \eqref{newgeo} to get a family of curves going from the origin to the target point  $q_t$.
\end{enumerate}
The explicit calculations were acquired using a CAS system Maple similarly  to the paper \cite{hzn}.

\subsection{Example in $(3,6)$}
Our goal is to find the geodesic going from the origin to the  target point 
$$ q_t= (x_t,z_t)=  2 e_1-e_2+3 e_3+e_1 \wedge e_2-2 e_1 \wedge e_3-2 e_2 \wedge e_3
$$
using the invariants \eqref{inv} in the target point. We have $$x \cdot x = 14,  \: \: z \cdot z =-9 , \:\: (x \wedge z)^*  =3$$ and together with the level set condition we get the system with the invariants at $q_t$. We solve the system numerically in Maple and present the solution with rounding up to four decimal digits  
\begin{align} 
C_3 &= 0.7252, D =0.6885,  K =0.9886 \label{koef} \\
 t &= 5.0236
 \end{align} 
 Using the constants \eqref{koef}, we get the geodesic in the moduli space from the origin to the point $q_o$ in the form
\begin{align*}  
q&= (x,z)=
(0.6965(1-\cos(0.9886 t))) e_1+0.6965 \sin(0.9886 t) e_2+0.7252 t e_3  \\ 
&-(0.2425 (0.9886 t-\sin(0.9886 t)))e_1 \wedge e_2 \\ &+(0.2555 (0.9886 t-2 \sin(0.9886 t)+0.9886 t \cos(0.9886 t))) e_1 \wedge e_3 \\
&+(0.2555 (2-0.9886 t \sin(0.9886 t)-2 \cos(0.9886 t))) e_2 \wedge e_3
 \end{align*}
and at the time $t = 5.0236$ we reach the point
\begin{align}
\begin{split} 
q_o& =0.5216 e_1-0.6741 e_2+ 3.643 e_3 -1.439 e_1 \wedge e_2+2.082 e_1 \wedge e_3+1.611 e_2 \wedge e_3
\end{split}
\end{align}
We are looking for the rotor which maps the multivector $q_o$ on the multivector $q_t$.
We consider the complete flags 
\begin{align*} &\{0\} \subset NO( x_t)  \subset NO(  x_t \wedge z_t^*)
\subset NO(  z_t \wedge z_t^*) \cong \mathbb R^3,\\
 &\{0\} \subset NO( x_o)  \subset NO(  x_o \wedge z_o^*)
\subset NO(  z_o \wedge z_o^*) \cong \mathbb R^3.
\end{align*}
We set $R_m =R_3= \text{id}$ and map the plane $x_o \wedge z_o^*$ to the plane 
$x_t \wedge z_t^*$ by the rotor $R_{m-1}=R_2$ according to the formula \eqref{rot}. Explicitly, 
$$ R_2 := 0.1334 +0.7083 e_1 \wedge e_2- 0.5483 e_1 \wedge e_3-0.4242 e_2 \wedge e_3$$
and we can map the multivector $q_o$ on the multivector $q_s= (x_s,z_s)=R_2 q_o \tilde {R_2}$ in such a way that $x_s$ and $z_s$ lie in the plane  $x_o \wedge z_o^*$.
Explicitly,
$$ q_s= (x_s,z_s)=
-2.8510 e_1+2.3208 e_2 -0.6956 e_3
-2.641 e_1 \wedge e_2+1.2523 e_1 \wedge e_ 3+0.6767 e_2 \wedge e_3. $$
Finally, we  map the plane $x_s \wedge (x_s \wedge z_s)^*$ to the plane 
$x_t \wedge (x_t \wedge z_t)^*$ by rotor
$$R_{m-2}= R_1=
0.3727+0.1716 e_1\wedge e_2+0.6863 e_1\wedge e_3-0.6005 e_2\wedge e_3
.$$
Altogether, we found the rotor $R=R_1R_2R_3$ and, when applied on \eqref{Gx},
we got a geodesic going from the origin to the point  $q_t$ in the form 

\begin{align*}
q&=(x,z) = 
(-0.5302+0.4673 t+0.5302 \cos(0.9886 t)-0.05136 \sin(0.9886 t)) e_1 \\
&-(0.008(22.124+36.347 t-22.124 \cos(0.9886 t)+76.628 \sin(0.9886 t))) e_2 \\
&+(-0.4156 \cos(0.9886 t)+0.4723 t+0.4156-0.3267 \sin(0.9886 t)) e_3 \\ &-(0.2 (-1.5244+0.7536 t \sin(0.9886 t)+0.4086 \sin(0.9886 t)  
+1.5244 \cos(0.9886 t) \\ &-0.5923 t \cos(0.9886 t)+0.1884 t)){e_1 \wedge e_2} \\
&+(-0.3184 t+0.1299-0.2223 t \cos(0.9886 t)-0.1299 \cos(0.9886 t) \\
&-0.06419 t \sin(0.9886 t)+0.547 \sin(0.9886 t)){e_1 \wedge e_3}\\
&+(-0.389+0.1923 t \sin(0.9886t)-0.1358 t +0.0186t \cos(0.9886 t) \\ &+0.389 \cos(0.9886 t)+0.1186 \sin(0.9886 t))  e_2 \wedge e_3.
\end{align*} 

In Figure \ref{ff1} we present the trajectories $(x_1,x_2,x_3)$ 
and $(z_1,z_2,z_3)$, respectively. 

 \begin{figure}[h]
 \begin{center}
 \includegraphics[height=50mm]{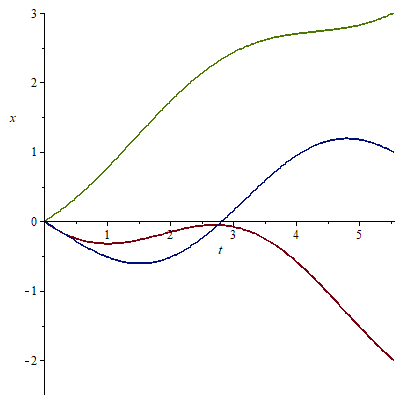} \ \ \ \  \ \ \
 \includegraphics[height=50mm]{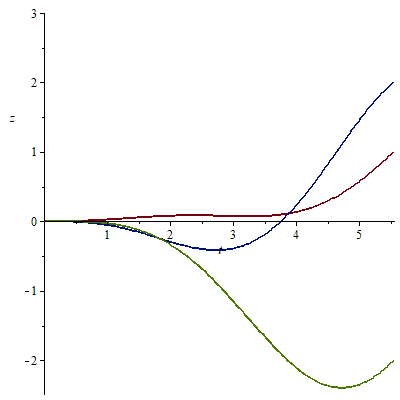}
 \caption{Trajectories $(x_1,x_2,x_3)$ and $(z_1,z_2,z_3)$}
 \label{ff1}
 \end{center}
 \end{figure}

\subsection{Example in $(4,7)$}
Our goal is to find the geodesic going from the origin to the  target point 
$$ q_t= (x_t,\ell_t, y_t)=  e_1 +2 e_2 +e_3 +3 e_4 -  e_1 \wedge e_2 +
2  e_1 \wedge e_3 + 2 e_1 \wedge e_4
$$
using the invariants \eqref{inv2} at the target point. We compute 
$$x  = 1,  \: \: \ell \cdot \ell =14 , \:\: y \cdot y  =-9, \:\: (\ell \cdot y) e_1 =-6,$$ and together with the level set condition, we get the system
with the invariants at $q_t$. We solve the system numerically in Maple and we present the solution with the constants rounded up to four decimal digits as follows 
\begin{align} 
C &= 0.6126, C_1 = -0.7816, C_2 = -0.5324, K = 0.8358 \label{koef2}, \\
 t &= 6.0748.
 \end{align} 
 Using the constants \eqref{koef2} we get a geodesic in the moduli space from the origin to the point $q_o$ in the form
\begin{align*}
	\begin{split} \label{Gx2}   
q= (x,\ell,y)&= 
   (-0.7816 \cos(0.8358 t) - 0.5324 \sin(0.8358 t) + 0.7816) e_1\\ &+
   (-0.7816 \sin(0.8358 t) + 0.5324 \cos(0.8358 t) - 0.5324) e_2\\
      &+ 0.6126 t e_3
+ 0.4471 (0.8358 t - \sin(0.8358 t)) e_1  \wedge e_2 \\ &+ 0.3665 \big((0.4449 t
    - 1.563) \sin(0.8358 t) + (0.6533 t + 1.065) \cos(0.8358 t) \\
    &- 1.065 + 0.6533 t\big) e_1 \wedge e_3
\end{split}
\end{align*}
and at the time $t = 6.0748$, we reach the point
\begin{align}
\begin{split} 
q_o& = (x_o,\ell_o,y_o)= e_1+ 0.3878 e_2 + 3.722 e_3 + 2.688 e_1 \wedge e_2 + 1.332 e_1 \wedge e_3.
\end{split}
\end{align}
We are looking for the rotor which maps the multivector $q_o$ on the multivector $q_t$.
We shall consider the complete flags starting with the line $NO(\ell)$ and ending with the space $NO(\ell \wedge y)$. To find the middle one, we can use the projection of the line   $NO(\ell)$ onto the plane  
$NO(\ell \wedge y)$. Thus we get \begin{align*} &\{0\} \subset NO( \ell_t)  \subset NO(  \ell_t \wedge (\ell_t \wedge y_t^*)^*) = NO(  \ell_t \wedge (\ell_t \cdot y_t)) \\  &
\subset NO(  \ell_t \wedge  y_t) \subset NO(  y_t \wedge  y_t^*)  \cong \mathbb R^4, \\
 &\{0\} \subset NO( \ell_o)  \subset NO(  \ell_o \wedge (\ell_o \wedge y_o^*)^*) = NO(  \ell_o \wedge (\ell_o \cdot y_o)) \\ 
& \subset NO(  \ell_o \wedge  y_o) \subset NO(  y_o \wedge  y_o^*)  \cong \mathbb R^4.
\end{align*}
First, we map the hyperplane $\ell_o \wedge y_o$ to the hyperplane 
$\ell_t \wedge y_t$ by the rotor
$R_3$ according to the formula \eqref{rot}.
We obtain
\begin{align*} 
R_3 &:= 0.4863 - 0.4335 e_2 \wedge e_4 - 0.7587 e_3 \wedge e_4.
\end{align*}
The next step is to map the hyperplane 
$NO(  \ell_o \wedge (\ell_o \cdot y_o) \wedge  (\ell_o \wedge y_o)^* ) $ to the hyperplane 
$NO(  \ell_t \wedge (\ell_t \cdot y_t) \wedge  (\ell_t \wedge y_t)^* ) $
by the rotor
$R_2$ according to the formula \eqref{rot}. Explicitly, 
\begin{align} 
R_2 &:= 
 0.0387 + 0.9993 e_2 \wedge e_3.
\end{align}
Finally, we map the hyperplane  $NO(  \ell_o  \wedge  (\ell_o  \wedge  (\ell_o \wedge y_o^*)^*)^* ) $ to the hyperplane $NO(  \ell_t  \wedge  (\ell_t  \wedge  (\ell_t \wedge y_t^*)^*)^* )$  
by the rotor
$R_1$ according to the formula \eqref{rot}. It turns out that $R_1= 1.$
Altogether, $R=R_1R_2R_3$ and, when applied on \eqref{Gx2}, we get a geodesic going from the origin to the point  $q_t$ as
\begin{align*}
q=(x,\ell,y) &=(- 0.7816\cos (  0.8358t ) - 0.5324\sin (  0.8358
t ) + 0.7816
)e_1
\\ &+
(0.5261\sin (  0.8358t ) - 0.3584\cos (  0.8358
t ) + 0.3946t+ 0.3584
)e_2\\&+
(- 0.4749\sin (  0.8358t ) + 0.3235\cos (  0.8358
t ) + 0.1235t- 0.3235
)e_3
\\&+
((  0.2513+ 0.1542t ) \cos (  0.8358t ) +
 ( - 0.06803+ 0.1050t ) \sin (  0.8358t ) \\& -
 0.2514- 0.09731t
 )e_1 \wedge e_2
\\ &+
((  0.07868+ 0.04827t ) \cos (  0.8358t ) +
 ( - 0.3871+ 0.03287t ) \sin (  0.8358t ) \\&-
 0.07869+ 0.2753t
 )e_1 \wedge e_3\\
 &+ ((  0.2880+ 0.1767t ) \cos (  0.8358t ) +
 (  0.1203t- 0.6112 ) \sin (  0.8358t )  \\ &+
 0.3342t- 0.2880
)e_1 \wedge e_4.
\end{align*} 
In Figure \ref{ff} we present trajectories $x$, $(\ell_2,\ell_3, \ell_4)$  and $(y_1,y_2,y_3)$, respectively. 
 \begin{figure}[h]
 \begin{center}
 \includegraphics[height=40mm]{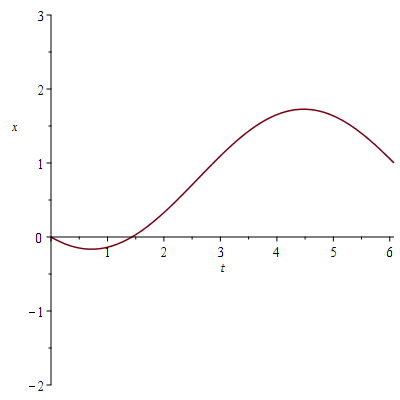} \ \
 \includegraphics[height=40mm]{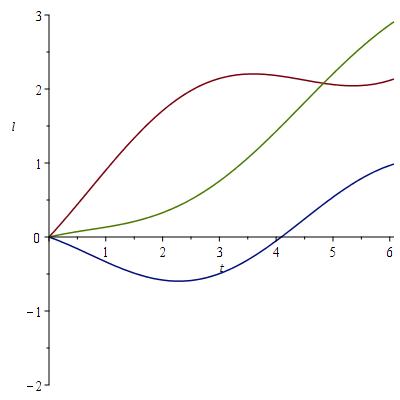}\ \
 \includegraphics[height=40mm]{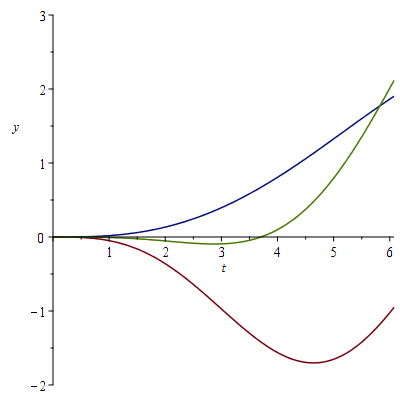}
 \caption{Trajectories $x$, $(\ell_2,\ell_3,\ell_4)$ and $(y_1,y_2,y_3)$}
 \label{ff}
 \end{center}
 \end{figure}

\section{Conclusion}
We presented the use of geometric algebra for the control systems invariant with respect to the orthogonal transformations. The main contribution of GA lies in a construction of the rotor between two bases of a vector space based only on algebraic computations in  a chosen GA. This allows us to use the geometric objects effectively and, analogously to quaternions, the implementations are faster than the usual computations with matrices.
We assessed an algorithm and illustrated its use on two particular examples with filtration $(3,6)$ corresponding to a trident snake robot control,  and $(4,7)$ corresponding to the control of a trident snake with flexible leg. All calculations were acquired using Maple packages Clifford \cite{Cliff} and DifferentialGeometry \cite{dg}.

\subsection{Acknowledgements} 
This work does not have any conflicts of interest.
The first three authors were supported by the grant no. FSI-S-20-6187. Fourth author is supported by the grant no. 20-11473S Symmetry and invariance in analysis, geometric modeling and control theory from the Czech Science Foundation. 
Finally, we thank the referee for valuable comments.

\end{document}